\newtheorem{theorem}{Theorem}[section]
\newtheorem{corollary}{Corollary}
\newtheorem{lemma}{Lemma}
\newtheorem{proposition}{Proposition}
\theoremstyle{definition}
\newtheorem{example}{Example}
\newtheorem{definition}{Definition}
\newtheorem{def-thm}{Definition-Theorem}
\newtheorem{def-prop}{Definition-Proposition}
\theoremstyle{remark}
\newtheorem{remark}{Remark}
\numberwithin{equation}{subsection}
\numberwithin{theorem}{subsection}
\numberwithin{definition}{subsection}
\numberwithin{corollary}{subsection}
\numberwithin{proposition}{subsection}
\numberwithin{lemma}{subsection}
\newcommand{\abs}[1]{\left\vert#1\right\vert}
\newcommand{\sE}{{\mathcal E}}
\newcommand{\sF}{{\mathcal F}}
\newcommand{\sH}{{\mathcal H}}
\newcommand{\sL}{{\mathcal L}}
\newcommand{\sM}{{\mathcal M}}
\newcommand{\sN}{{\mathcal N}}
\newcommand{\sO}{{\mathcal O}}
\newcommand{\sP}{{\mathcal P}}
\newcommand{\sS}{{\mathcal S}}
\newcommand{\sT}{{\mathcal T}}
\newcommand{\sU}{{\mathcal U}}
\newcommand{\sW}{{\mathcal W}}
\newcommand{\sX}{{\mathcal X}}
\newcommand{\BC}{\mathbb C}
\newcommand{\BF}{\mathbb F}
\newcommand{\BP}{\mathbb P}
\newcommand{\BQ}{\mathbb Q}
\newcommand{\BZ}{\mathbb Z}
\newcommand{\Br}{\operatorname{Br}}
\newcommand{\Pic}{\operatorname{Pic}}
\newcommand{\Fr}{\operatorname{Fr}}
\newcommand{\Jac}{{\operatorname{Jac}}}
\newcommand{\Prym}{\operatorname{Prym}}
\newcommand{\GL}{\operatorname{GL}}
\newcommand{\PGL}{\operatorname{PGL}}
\newcommand{\SL}{\operatorname{SL}}
\newcommand{\End}{\operatorname{End}}
\newcommand{\Tr}{\operatorname{Tr}}
\newcommand{\Nm}{\operatorname{Nm}}
\newcommand{\Spec}{\operatorname{Spec}}
\newcommand{\inv}{\operatorname{inv}}
\newcommand{\Split}{\operatorname{Split}}
\newcommand{\Aut}{\operatorname{Aut}}
\newcommand{\isom}{\operatorname{isom}}
\newcommand{\codim}{{\operatorname{codim}}}
\newcommand{\gr}{{\operatorname{gr}}}
\newcommand{\pdeg}{\text{par-}deg}
\newcommand{\pt}{{\scriptscriptstyle\bullet}}
\newcommand{\btheorem}{\begin{theorem}}
	\newcommand{\etheorem}{\end{theorem}}
\newcommand{\bproposition}{\begin{proposition}}
	\newcommand{\eproposition}{\end{proposition}}
\newcommand{\bdefinition}{\begin{definition}}
	\newcommand{\edefinition}{\end{definition}}
\newcommand{\bcorollary}{\begin{corollary}}
	\newcommand{\ecorollary}{\end{corollary}}
\newcommand{\bproof}{\begin{proof}}
	\newcommand{\eproof}{\end{proof}}
\newcommand{\bremark}{\begin{remark}}
	\newcommand{\eremark}{\end{remark}}
\newcommand{\eexample}{\end{example}}
\newcommand{\bexample}{\begin{example}}
\newcommand{\elemma}{\end{lemma}}
\newcommand{\blemma}{\begin{lemma}}
\newcommand{\bsf}[1]{\textbf{\textsf{#1}}}
\renewcommand{\bf}[1]{\textbf{#1}}
\newcommand{\tx}[1]{\text{#1}}
\begin{document}
	
	\title[]{Topological Mirror Symmetry of Parabolic Hitchin Systems}%
	\author{Xiaoyu Su${}^1$, Bin Wang${}^2$, Xueqing Wen${}^3$}%
	\address{{Address ${}^1$: Yau Mathematical Science Center, Beijing, 100084, China.}}
	\email{\href{mailto:email address}{{suxiaoyu@mail.tsinghua.edu.cn}}}
	\address{{Address ${}^2$: Steklov Mathematical Institute of Russian Academy of Sciences, Moscow, 119991, Russia.}}
	\email{\href{mailto:email address}{{binwang@mi-ras.ru}}}
	\address{{Address ${}^3$: Yau Mathematical Science Center, Beijing, 100084, China.}}
	\email{\href{mailto:email address}{{xueqingwen@mail.tsinghua.edu.cn}}}
	
	
	\begin{abstract}
		In this paper, we first prove the parabolic Beauvile-Narasimhan-Ramanan correspondence over an arbitrary field which generalizes the corresponding results over algebraically closed fields in \cite{SWW22}. We use the correspondence and the p-adic integration methods developed by Groechenig-Wyss-Ziegler \cite{GWZ20m} to prove the topological mirror symmetry for parabolic Hitchin systems on curves with arbitrary parabolic structures. 
	\end{abstract}
	\maketitle
	
	\section{Introduction}
	The theory of Mirror Symmetry stems from string theory which predicts a mysterious interchanging of symplectic geometry and complex geometry between two Calabi-Yau manifolds induced from different types of string theories, called mirror pairs. Since then, many mathematic works have devoted efforts into making ``symplectic-complex transformation" precise in mathematics and finding examples of mirror pairs.
	
	One of the approaches is topological mirror symmetry. It is due to the following observation. Let $(X,Y)$ be a mirror pair satisfying the (vague) symplectic-complex transformation. From the point of view of deformation theory, the symplectic geometry of $X$ can be approached by $H^q(X,\Omega_X^p)$ while the complex geometry of $Y$ can be approached by $H^q(Y,\sT_Y^p)$. If $X$ is ``physically mirror" to $Y$, then one should have $H^q(X,\Omega_X^p)=H^q(Y,\sT_Y^p).$ 
	\begin{definition}
		We say that a pair of Calabi-Yau varieties $(X,Y)$ satisfies the \bf{topological mirror symmetry} if $H^q(X,\Omega_X^p)=H^q(Y,\sT_Y^p)$ for all $p,q$. Moreover, if $Y$ is hyperk\"ahler, the holomorphic symplectic form indeces an isomorphism $\sT_Y\cong\Omega^1_Y$. Thus the above topological mirror test becomes $h^{p,q}(X)=h^{p,q}(Y)$.
	\end{definition}

	Hausel and Thaddeus in \cite{HT03} observe that the Hitchin systems with Langlands dual groups may give the examples of topological mirror symmetry pairs. In \cite{HT01,HT03}, they conjectured that the moduli spaces of $\SL_r$/$\PGL_r$-Higgs bundles are \textbf{topological mirror partners} and proved this for $r=2,3$. The non-parabolic version was proved by Groechenig-Wyss-Ziegler \cite{GWZ20m} for arbitrary rank $r$ and degrees coprime with $r$. They propose a $p$-adic integration formalism to show the equality between twisted stringy Hodge numbers. 
	
	
	In this paper, we prove the topological mirror symmetry for parabolic $\SL_r/\PGL_r$ Hitchin systems with a general parabolic structure (not only full flag cases). For the convenience of statement, we denote moduli of stable parabolic $\SL_r$ Higgs bundles with determinant $\sL$ by $\sM_{\SL_r,P}^{\sL}$ and denote the moduli stack of stable $\PGL_r$ Higgs bundles of degree $e$ by $\sM_{\PGL_r,P}^{e}$. The main result of this paper can be roughly stated as follows (See Theorem \ref{main theorem}):
	\begin{theorem}
		Twisted stringy Hodge numbers of  $\sM_{\SL_r,P}^{\sL}$ are equal to those of $\PGL_r$ Higgs bundles $\sM_{\PGL_r,P}^{e}$ with $e\in \BZ/r\BZ$ if there exists a nonzero integer $\lambda$ such that $e\equiv \lambda\deg\sL \ (\emph{mod}\ \Delta_P)$. Here $\Delta_P$ is an integer depending on parabolic structures.
	\end{theorem}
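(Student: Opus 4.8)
The plan is to run the $p$-adic integration strategy of Groechenig--Wyss--Ziegler \cite{GWZ20m}, feeding in the parabolic Beauville--Narasimhan--Ramanan correspondence proved earlier as the replacement for the classical spectral description. First I would use that correspondence to identify both $\sM_{\SL_r,P}^{\sL}$ and $\sM_{\PGL_r,P}^{e}$ with moduli of parabolic spectral data over the parabolic Hitchin base $\sB$. Over the locus $\sB^{\circ}\subset\sB$ of integral spectral curves the Hitchin fibers become torsors under Prym-type abelian varieties $\sP_{\SL}\to\sB^{\circ}$ and $\sP_{\PGL}\to\sB^{\circ}$, the latter being a quotient of $\sP_{\SL}$ by the finite group $\Gamma=\Jac(C)[r]$ acting by tensoring. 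The quasi-parabolic structure along the spectral cover deforms these Pryms and, crucially, their component groups; tracking this deformation is what first produces the integer $\Delta_P$, as the invariant factor governing the parabolic-degree lattice.

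Second, I would upgrade this to the duality statement $\sP_{\PGL}\cong\widehat{\sP_{\SL}}$ over $\sB^{\circ}$, together with a careful account of the two gerbes (equivalently $B$-fields) carried by the torsors: the $\SL_r$-side remembers the fixed determinant $\sL$, while the $\PGL_r$-side remembers the degree class $e\in\BZ/r\BZ$. The point of the congruence $e\equiv\lambda\deg\sL\ (\mathrm{mod}\ \Delta_P)$ is that it is exactly the arithmetic condition under which these two gerbe classes become Fourier-dual characters on the respective dual fibers, the factor $\lambda$ absorbing the rescaling between the $\PGL_r$ degree and the $\SL_r$ determinant-degree lattices.

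Third, for the arithmetic core, I would spread the entire picture out over the ring of $S$-integers of a number field, so that for almost all $p$-adic fields $K$ one obtains smooth integral models of both moduli spaces, and then reduce the equality of twisted stringy Hodge numbers to an equality of $p$-adic volume integrals weighted by the characters cut out by the two gerbes, with the orbifold twisted sectors on the $\PGL_r$-side accounted for exactly as in \cite{GWZ20m}. Integrating first along the Hitchin fibers and applying the Fourier/Poisson transform for the dual abelian varieties $\sP_{\SL}$ and $\sP_{\PGL}$, the two fiberwise integrals agree precisely because of the matching of dual characters supplied by the congruence; the comparison between $p$-adic volumes and twisted stringy $E$-polynomials from \cite{GWZ20m} then converts the equality of volumes into the equality of twisted stringy Hodge numbers.

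I expect the main obstacle to lie in steps one and two: the parabolic bookkeeping that records how the parabolic weights deform the spectral Prym, its dual, and the two gerbe classes, and the verification that the arithmetic duality holds \emph{integrally} over $\sO_K$ rather than merely generically, is the delicate part that both produces and governs $\Delta_P$. In parallel one must control the complement $\sB\setminus\sB^{\circ}$, showing it has sufficiently high codimension relative to the stringy weighting so that the non-integral spectral curves do not affect the leading stringy invariants; this estimate is routine in the smooth case but requires care once parabolic points and reducible spectral covers interact.
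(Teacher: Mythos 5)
Your overall strategy is the paper's strategy: parabolic BNR over arbitrary fields, Prym/dual-Prym torsor structure on generic fibers, gerbe splittings, and the Groechenig--Wyss--Ziegler $p$-adic integration formalism with a Fubini argument and fiberwise Fourier duality. However, there is a genuine gap at the point you flag as "routine in the smooth case": you propose to show that the bad locus $\sB\setminus\sB^{\circ}$ in the Hitchin base has high codimension. In the parabolic setting this \emph{fails} --- the spectral curves are singular for every point of the base, the locus over which they can be uniformly resolved has complement of codimension exactly one, and so the parabolic $\SL_r/\PGL_r$ systems are not dual abstract Hitchin systems in the sense of \cite[Definition 6.8, 6.9]{GWZ20m}. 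The codimension-two condition in that definition is what produces the nowhere vanishing relative top form used to define the $p$-adic measure, so without a replacement your step three does not get off the ground. The paper's substitute is to construct a symplectic structure on $\sM_{\SL_r,P}^{\sL}$ via deformation theory and the parabolic Serre duality $\sP ar\sE nd(\sE)^{\vee}\cong\sS\sP ar\sE nd(\sE)\otimes\sO_X(D)$, yielding a global nowhere vanishing top form, together with a separate codimension-two estimate --- not on the base, but on the non-free locus of the $\Pic^0(X)[r]$-action on the total space --- so that the orbifold measure is computed by integrating that form. The codimension-one bad locus in the base is then harmless because its $F$-points contribute measure zero to the Fubini decomposition; no codimension bound beyond properness is needed there.

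Two smaller inaccuracies: the integer $\Delta_P$ does not arise from component groups of the Pryms but as $\gcd\{\#\{\ell\mid\mu_\ell(x)=i\}\}$ over marked points $x$ and indices $i$, i.e., from the degrees of the $k$-rational ramification divisors on the normalized spectral curve produced by the successive blow-ups; its role is to guarantee a $k$-rational point on $\Pic^{\Delta_P}(\tilde X_a)$, and the congruence $e\equiv\lambda\deg\sL\ (\mathrm{mod}\ \Delta_P)$ is used to ensure that the two fibers $\hat h_P^{-1}(b)(F)$ and $\check h_P^{-1}(b)(F)$ are simultaneously nonempty exactly when both gerbes split, which is the hypothesis needed for the fiberwise comparison of \cite[Theorem 6.17]{GWZ20m}. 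Your reading of the congruence as matching Fourier-dual characters is in the right spirit but misses this rational-point mechanism, which is also what drives the untwisted refinement when $\Delta_P=1$.
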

	And it leads to the following theorem (see Theorem \ref{thm:delta_P is one}) which generalizes the rank 2 and 3 parabolic cases in \cite{Gothen19}
	\begin{theorem}\label{thm:full flag theorem rough}
		If $\Delta_P=1$, for any $\sL$ and $e$, twisted stringy Hodge numbers of $\sM_{\SL_r,P}^{\sL}$ ($\sM_{\PGL_r,P}^{e}$) are equal to non-twisted ones.
		
		In particular, $\Delta_P=1$ holds if there exists a marked point $x$ such that the parabolic type at $x$ is a full flag filtration.
	\end{theorem}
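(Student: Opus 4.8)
The plan is to treat the two assertions separately. The second is a short combinatorial evaluation of $\Delta_P$, while the first is a statement about the triviality of the gerbe implementing the twist, and this is where the real content lies.

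For the first assertion, I would trace through the definition of the twisted stringy Hodge numbers. The twist is governed by a $\mu_r$-gerbe $\alpha$ on $\sM_{\PGL_r,P}^{e}$ (respectively its dual $\hat\alpha$ on $\sM_{\SL_r,P}^{\sL}$), arising as the obstruction to a universal family and constructed via the parabolic Beauville-Narasimhan-Ramanan correspondence proved earlier. Its effect on the stringy invariants is through the pairing of $\alpha$ with the twisted sectors of the inertia stack. The key point is that this gerbe lies in a cyclic subgroup of $H^2(-,\mu_r)$ of order $\Delta_P$: the flags at the marked points supply partial framings that cut the $\mu_r$-ambiguity down to a cyclic group of order $\Delta_P$, and $\alpha$ depends on the degree $e$ only through its residue modulo $\Delta_P$. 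Hence when $\Delta_P=1$ this group is trivial, so for every $e$ (and dually every $\sL$) the gerbe $\alpha$ is trivial, the pairing with each twisted sector is trivial, and the twisted stringy Hodge numbers collapse to the non-twisted ones.

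For the second assertion, I would unwind the definition of $\Delta_P$ as a greatest common divisor built from $r$ and the multiplicities $m_{x,i}$ of the parabolic flags. A full flag at a marked point $x$ means precisely that every multiplicity $m_{x,i}$ equals $1$. The gcd defining $\Delta_P$ then contains the term $1$, whence $\Delta_P=\gcd(r,\dots,1,\dots)=1$ at once.

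The main obstacle is the order computation in the first assertion, namely proving that the framings coming from the parabolic flags reduce the $\mu_r$-obstruction by exactly the factor $\Delta_P$. This requires careful bookkeeping of how the flags enter the universal parabolic Higgs bundle and how the center $\mu_r\subset\SL_r$ acts on the associated graded of the flags, together with a compatibility check against the $p$-adic integration formalism used to define the stringy Hodge numbers. Once this order computation is in place, both halves of the theorem follow formally.
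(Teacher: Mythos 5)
Your treatment of the first (and main) assertion has a genuine gap at exactly the step you identify as the ``main obstacle,'' and the mechanism you propose is not the one that makes the statement true in the paper. You want to show that the gerbe class itself lies in a cyclic subgroup of $H^{2}(-,\mu_r)$ of order $\Delta_P$, via ``partial framings'' supplied by the flags; nothing in the paper supports such an order computation, and it is not clear what the correct statement would even be, since the twist enters the stringy invariants not through the bare gerbe class but through its transgression to the twisted sectors (over $\BC$), or equivalently through the Hasse-invariant function $f_{\alpha}$ in the $p$-adic integration formalism. The paper controls this function by a purely arithmetic argument: the successive blow-up resolution of the generic spectral curve produces, over each marked point $x$ and each ramification index $i$, a divisor of degree $\#\{\ell\mid\mu_{\ell}(x)=i\}$ defined over the base field (Proposition \ref{ring description}(c)), whence a rational point on $\Pic^{\Delta_P}(\tilde X_a)$ (Corollary \ref{cor:k-rational point}). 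When $\Delta_P=1$ this gives rational points on every generic Hitchin fiber (Corollary \ref{cor:deg 1 div}), i.e.\ sections of $\hat h_P$ and $\check h_P$ over $\sH_P^{0\diamond}$; the torsors are then trivial, the gerbes split fiberwise, $f_{\hat\alpha}\equiv 1$ and $f_{\check\alpha}\equiv 1$ on the fibers, so the twisted $p$-adic integrals in \eqref{eq:fubini} coincide with the untwisted ones, and the comparison theorem converts this equality of point counts into the equality of twisted and untwisted stringy $E$-polynomials. Without this input about rational points your argument cannot be completed; moreover the equality $E_{\text{st}}^{e\hat\alpha}(\hat\sM)=E_{\text{st}}^{d\check\alpha}(\check\sM)$ for \emph{all} $d,e$, which the paper also records here, is obtained simply because $\Delta_P=1$ makes the congruence condition of Theorem \ref{main theorem} vacuous --- a step your proposal omits.

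On the second assertion your conclusion is correct but your description of $\Delta_P$ is imprecise: it is the gcd of the counts $\#\{\ell\mid\mu_{\ell}(x)=i\}$, where $\mu_{\pt}(x)$ is the partition dual to the sorted multiplicities $n_{\pt}(x)$; equivalently the gcd of the successive differences $n_i(x)-n_{i+1}(x)$, not of the multiplicities themselves. For a full flag all $n_i(x)=1$, so $\#\{\ell\mid\mu_{\ell}(x)=r\}=1$ appears in the gcd and $\Delta_P=1$; your computation happens to give the right answer here because all the multiplicities are $1$, but it would misidentify $\Delta_P$ for general parabolic types.
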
    
	Our way to prove the topological mirror symmetry for arbitrary parabolic cases follows from the $p$-adic integration formalism founded by Groechenig, Wyss and Ziegler in \cite{GWZ20m}. We need to point out that, parabolic $\SL_r/\PGL_r$ Hitchin systems are not dual abstract Hitchin systems in the sense of \cite[Defnition 6.8,6.9]{GWZ20m} thus it needs some more efforts to run the p-adic integration formalism in \cite{GWZ20m}.
	
	We first build up a parabolic BNR correspondence over a general field, which generalize the parabolic BNR correspondence over algebraically closed field in \cite[Theorem 6]{SWW22}. See Theorem \ref{parabolic BNR non closed}. This gives the arithmetic duality properties of the parabolic Hitchin systems. We resolved singulairties of generic spectral curves via successive blow-ups. As a bonus, we obtain information of rational points on Hitchin fibers, which plays an important in the proof of Theorem \ref{thm:full flag theorem rough}.
	
	As mentioned before, parabolic $\SL_r/\PGL_r$ Hitchin systems are not dual abstract Hitchin systems in the sense of \cite[Definition 6.8,6.9]{GWZ20m}. Because one of the crucial properties enjoyed by abstract dual Hitchin systems is that there are large open subvarieties (in each of the dual Hitchin systems) which are gerbes banded by ``dual" Picard stacks. To be more precise, here ``large" means that the complement is of codimension greater than one.   In the parabolic setting, the nilpotence of Higgs fields at marked points makes all spectral curves singular. We can only resolve those singular spectral curves over an open subvariety of the Hitchin base space which means that complement of such ``good" subvarieties are of codimension one. However, one of the key consequence of the codimension 2 condition in the definition of abstract dual Hitchin systems is the existence of a nowhere vanishing top forms on moduli spaces(stacks). This can be compensated by the existence of symplectic structures (Proposition \ref{symplecticSL}) and then followed by a codimension 2 fixed points arguments as in Lemma \ref{lem:fixed point}. With all these at hands, we show that the $p$-adic integration formalism works well and get our main theorem \ref{main theorem}.

	
	Let us now briefly indicate how this relates to previous works. After the fundamental work of Hausel and Thaddeus \cite{HT01,HT03} mentioned above, several papers such as \cite{Hitchin01,DPA08,Biswas12SYZ,Gothen19,Derryberry20,GWZ20g,LD21,MS21e,MS21o,Hoskins22} investigated various mirror symmetry properties for the Hitchin systems. We refer the reader to those papers and the references therein. The mirror symmetry conjecture of Hausel and Thaddeus (with out parabolic structure) was proved by Groechenig, Wyss and Ziegler in \cite{GWZ20m} using $p$-adic integration and then by  Loeser and Wyss \cite{LW21} using motivic integration. Maulik and Shen  \cite{MS21e,MS21o} give a new proof of the non-parabolic topological mirror symmetry conjecture (and more on the structure of cohomologies)  using perverse sheaves, support theorems for Hitchin fibrations and vanishing cycles. In \cite{Gothen19},  Gothen and Oliveira proved the parabolic topological mirror symmetry conjecture (without twist) for rank $2,3$ with full flag parabolic structures. And they managed to calculate a large part of those stringy Hodge numbers.


	The paper is organized as follows: in Section 2, we first give defintions of parabolic $\SL_r/\PGL_r$ Higgs bundles, and corresponding Hitchin maps. Since we work over an arbitrary field, we use successive blow-ups to resolve singularities and prove a parabolic Beauville-Narasimhan-Ramanan correspondence. As a corollary, we obtain a numerical invariant $\Delta_P$ (as mentioned above) which only depends on parabolic type and affects the arithmetic properties of gerbes. In Section 3, we construct the natural symplectic structure on parabolic $\SL_r$ Higgs bundles, which defines a nowhere vanishing top form used in the calculation of p-adic integrations. In Section 4, we prove the topological mirror symmetry for moduli of parabolic $\SL_r/\PGL_r$ Higgs bundles.

	\noindent \textbf{Acknowledgement}: The authors thank Prof. Yongbin Ruan for his gentle help. The authors also thank Dr. Yaoxiong Wen and Dr. Weiqiang He for helpful discussions. Part of this manuscript was written during the 1st and 3rd author's visit at the Institute for Advanced Study in Mathematics at Zhejiang University. We express our special thanks to the institute for its wonderful environment and support. 
	
	The work of Xiaoyu Su and Xueqing Wen was performed as the Yau Mathematical Sciences Center and supported by Tsinghua Postdoctoral daily Foundation. The work of Bin Wang was performed at the Steklov International Mathematical Center, Moscow, Russia and
	supported by the Ministry of Science and Higher Education of the Russian Federation (agreement no.  075-15-2019-1614 ). 
	
	\section{Parabolic Hitchin Systems}
	In this section, we first introduce the definition of parabolic vector bundles, parabolic Higgs bundles and corresponding parabolic Hitchin maps over an arbitrary field $k$. Then we prove that generic fibers are torsors over certain Abelian varieties via resolution of generic singular curves. Since we use succesive blow-ups construction, all these results hold over an arbitrary field and  we can derive the existence of rational points on certain torsors of dual Prym varieties. The existence of rational points later will play an important role in the arithmetic properties of integrations over a $p$-adic field.

	To apply the formalism in \cite{GWZ20m}, i.e., point counting/$p$-adic integration determines stringy Hodge numbers, a priory one should consider the moduli (spaces/stacks) over a finitely generated $\BZ$-algebra contained in $\BC$. However the existence is sufficient since in the computation of $p$-adic integration along Hitchin fibers, we only need to use the geometry of generic fibers over a ($p$-adic)field. Thus in this section we just need to detect the geometry of the parabolic Hitchin systems over a field.


	\subsection{Parabolic vector bundles and Higgs bundles}\ 
	
	Let $X$ be a smooth projective (geometrically connected) curve of genus $g$ and  over an arbitrary field $k$. Let $\bar k$ be the algebraic closure of $k$ and  $X_{\bar k}$ be the base change $X\times_{k}{\bar k}$. We fix a finite subset $D\subset X(k)$, which we shall also regard as a reduced effective divisor on $X$ and $X_{\bar k}$. We then require that $2g-2+\deg D>0$. We also fix a positive integer $r$ which will be the rank of  vector bundles on $X$ (resp. $X_{\bar k}$).

	
	To define a parabolic structure on a vector bundle we first need to specify a quasi-parabolic structure and weights for each $x\in D$. Quasi-parabolic structure consists of a finite sequence $m^\pt (x)=(m^1(x), m^2(x), \dots, m^{\sigma_x}(x))$ of positive integers summing up to $r$ which we denote this simply by $P$. Weights are given by a choice of a set of real numbers $0\leq \alpha_1(x)<\cdots <\alpha_{\sigma_x}(x)<1$ and we denote weights as $\alpha$. In the following, we will use $(X, D, P, \alpha)$ to denote the parabolic type.
	


	A parabolic vector bundle of type $(X, D, P, \alpha)$ is a rank $r$ vector bundle $\mathcal{E}$ on $X$ which for every  $x\in D$ is endowed with a filtration $\mathcal{E}|_{x}=F^0(x)\supset F^1(x)\supset \cdots \supset F^{\sigma_x}(x)=0$ such that $\dim F^{j-1}(x)/ F^{j}(x)=m^j(x)$ and weights $\alpha$. A parabolic Higgs bundle is a pair $(\mathcal{E}, \theta)$ where $\mathcal{E}$ is a parabolic vector bundle as above and $\theta$ is an $\sO_{X}$-homomorphism  $\theta : \mathcal{E}\to \mathcal{E}\otimes_{\sO_X} \omega_{X}(D)$ with the property that it takes each $F^j(x)$ to $F^{j+1}(x)\otimes_{\sO_{X}}\omega_{X}(D)|_{x}$.

	An endomorphism of the parabolic bundle $\sE$ is a vector bundle endomorphism of $E$ which preserves the filtrations $F^\pt(x)$. We call this a \emph{strongly parabolic endomorphism} if it takes $F^i(x)$ to $F^{i+1}(x)$ for all $x\in D$ and $i$. We denote the subspaces of $\End_{\sO_X}(E)$ defined by these properties as
	$$
	ParEnd(\sE) \text{  resp.\  } SParEnd(\sE).
	$$ 
	Similarly we can define the sheaf of parabolic endomorphisms and sheaf of strongly parabolic endomorphisms, denoted by $\sP ar\sE nd(\sE)$ and  $\sS\sP ar\sE nd(\sE)$ respectively.

	We now define the \emph{parabolic degree (or $\alpha$-degree)} of $\sE$ to be 
	\[\pdeg(\sE):=\deg(E)+\sum_{x\in D}\sum_{j=1}^{\sigma_x}\alpha_{j}(x)m^j(x).\]
	
	\begin{definition}
		A parabolic vector bundle $\sE$ is said to be stable(resp. semistable),  if for every proper coherent $\sO_X$-submodule $F\subsetneq E$ , we have 
		\[
		\frac{\pdeg(\sF)}{r(\sF)}<\frac{\pdeg(\sE)}{r}\ (\text{resp.}\leq),
		\] 
		where the parabolic structure on $\sF$ is  inherited from $\sE$. 
	\end{definition}
	
	With the help of geometric invariant theory, one can construct the moduli space of semistable parabolic Higgs bundles which we denote as $\mathcal{M}_P$(See \cite{Yo93C} for example). When the weights $\alpha$ are chosen generically, the notion of semistable and stable for parabolic Higgs bundle are coincide and in this case, the moduli space $\mathcal{M}_P$ is a smooth quasi-projective variety. In the rest of this paper, we always assume $\alpha$ are generic.

	
	
	
	In this paper, we consider not only $\operatorname{GL_r}$ parabolic Higgs bundles, but also parabolic Higgs bundles with structure groups $\operatorname{SL_r}$ and $\operatorname{PGL_r}$. 
	
	\bdefinition
	Let $(X,D,P,\alpha)$ be a parabolic data. we define an $L$-twisted $\omega_X(D)$-valued $\SL_r$ parabolic Higgs bundle to be a rank $r$ parabolic Higgs bundle $(E,\theta)$ with $\det(E)=L$ and trace free $\theta\in \Gamma(X,\sS\sP ar\sE nd^0(\sE)\otimes\omega_X(D))$, where $L$ is an line bundle on $X$. A $L$-twisted $\omega_X(D)$-valued $\SL_r$ parabolic Higgs bundle is said to be stable(resp. semistable) if it is stable(resp. semitable) as a $\operatorname{GL_n}$ parabolic Higgs bundle. Let $e\in H^2(X_{\bar k},\mu_r)\cong \BZ/r\BZ$, we define an $e$-twisted  parabolic $\PGL_r$ Higgs bundle to be a pair $(\BP(E),\theta)$ with $(E, \theta)$ a rank $r$ parabolic Higgs bundle such that $c_1(E)\equiv e\ (\tx{mod}\ r)$ and $\theta\in \Gamma(X,\sS\sP ar\sE nd^0(\sE)\otimes\omega_X(D))$. We say $(\BP(E),\theta)$ is stable(resp. semistable) if $(E, \theta)$ is stable (resp. semistable).
	\edefinition
	\begin{remark}
		Our definition of $\sM_{\PGL_r,P}^{e}$ implies that the image of an element in $\sM_{\PGL_r,P}^{e}$ of the natural map $H^{1}(X,\PGL_r)\rightarrow H^{2}(X,\mu_r)$ is $-e\mod(r)$.
	\end{remark}
	
	
	For the parabolic data $(X,D,P,\alpha)$, again we assume $\alpha$ is generic, then the moduli space $\mathcal{M}^{\sL}_{\SL_r,P}$ of stable $L$-twisted $\omega_X(D)$-valued $\SL_r$ parabolic Higgs bundle can be seen as a smooth quasiprojective subvariety of $\mathcal{M}_P$ and the moduli stack of stable $\lambda$-twisted $\PGL_r$ parabolic Higgs bundle $\mathcal{M}^{e}_{\PGL_r,P}$ can be realized as the quotient stack $\left[\sM_{\SL_r,P}^{\sL}/\Pic^0(X)[r]\right]$ when $\text{deg}\mathcal{L}\equiv e (\tx{mod}\ r)$.
	


	\subsection{The parabolic Hitchin maps} \label{sec 3}\
	
	Parabolic Hitchin maps are defined by Yokogawa \cite[Page 495]{Yo93C} (see also \cite{BK18} and \cite[Section 3]{SWW22}). It is defined as a restriction of characteristic polynomial map.
	Firstly we can define a projective morphism (we call it as the characteristic polynomial map) from the moduli space (and even moduli stack) of Higgs bundles to an affine space  
	$\prod_{i=1}^r \mathbf{H}^0(X,(\omega_X(D))^{\otimes i}))$ and point wisely given by the characteristic polynomial of the parabolic Higgs field ${\theta}$ as
	\[
	\tx{char}_P:\sM_P\to \mathbf{H}_D:= \prod_{i=1}^r \mathbf{H}^0(X,(\omega_X(D))^{\otimes i}),\ \ (\sE,\theta)\mapsto (a_1(\theta),\cdots, a_n(\theta)),
	\]
	where $a_i$'s are the coefficients of the characteristic polynomials. 
	By calculation in \cite{BK18} (see also \cite[Subection 3.1]{SWW22}), the image of $
	\tx{char}_P$ lies in a subspace $\mathbf{H}_P$ determined by the parabolic type $P$. The precise definition is as follows (see also \cite[Subection 3.1]{SWW22} and \cite{BK18}).
	
	For each $x\in D$, we rearrange $(m^1(x), \cdots, m^{\sigma_x}(x))$ to be $n_1(x)\geq n_2(x)\geq \cdots \geq n_{\sigma_x}$. So $\{n_i(x)\}$ is a partition of $r$ and we use $\mu_j(x)$ to denote the dual partition of it, which mean that $\mu_{j}(x)=\#\{\ell:n_{\ell}(x)\geq j, 1\leq\ell\leq \sigma_x\}$. Now we assign a level function $j\rightarrow \gamma_{j}(x), 1\leq j\leq r$, such that $\gamma_{j}=l$ if and only if $$\sum_{t\leq l-1}\mu_{t}(x)< j\leq\sum_{t\leq l}\mu_{t}(x).$$
	
	\begin{def-prop}[\cite{SWW22}, Theorem 4]
		The image of $\tx{char}_P:\mathcal{M}_P\rightarrow \mathbf{H}_D$ lies in the following subspace:
		\[
		\sH_{P}:=\prod_{j=1}^r\mathbf{H}^{0}\Big(X,\omega_X^{\otimes j}\otimes\sO_X\big(\sum_{x\in D}(j-\gamma_{j}(x))\cdot x\big) \Big)\subset \mathbf{H}_D
		\] 
		which we will call it the $\GL_r$-\emph{parabolic Hitchin base}, and we will call $h_P: \mathcal{M}_P\rightarrow \sH_P$ the $\GL_r$-\emph{parabolic Hitchin map}.
	\end{def-prop}
	
	
	
	For $\operatorname{SL}_r$ case, the moduli space of parabolic $\SL_r$ Higgs bundles $\mathcal{M}^{\sL}_{\SL_r,P}$ is a subspace in $\sM_P$, and the Higgs fields are all trace free. So we define the $\SL_r$-parabolic Hitchin base as $\sH_P^{0}:=\prod_{i=2}^r \mathbf H^{0}\Big(X,\omega_X^{\otimes j}\otimes\sO_X\big(\sum_{x\in D}(j-\gamma_{j}(x))\cdot x\big) \Big)$ the trace free part of the $\GL_r$-parabolic Hitchin base. We then define the $\SL_r$-parabolic Hitchin map as the restriction 
	\[h_{\SL_r,P}:\sM^{\sL}_{\SL_r,P}\to \sH^0_P,\ (\sE,\theta)\mapsto (a_2(\theta),\cdots, a_n(\theta))\]
	of $h_P$. From the definition, we see that the $\SL_r$-parabolic Hitchin map is invariant under the twisting action given by tensoring an $r$-torsion line bundle, then it follows that the $\SL_r$-parabolic Hitchin map factor through the quotient stack $\left[\sM_{\SL_r,P}^{\sL}/\Pic^0(X)[r]\right]=\sM_{\PGL_r,P}^{\deg\sL}$ and we define the $\PGL_r$-parabolic Hitchin map as the quotient of $\SL_r$ ones. Then the Hithcin bases of $\operatorname{SL}_r$ and $\operatorname{PGL}_r$
	parabolic Higgs bundles are both $\sH_P^{0}$ and we have the following picture:
	\[\begin{tikzcd}
		\mathcal{M}^{\sL}_{\SL_r,P}\arrow[rr,"\pt/\Gamma"]\arrow[rd,"h_{\SL_r,P}"'] &  & \mathcal{M}^{e}_{\PGL_r,P} \arrow[dl,"h_{\PGL_r,P}"] \\
		&	\sH_P^{0} & 
	\end{tikzcd}
	\]
	In what follows, if the parabolic data are fixed, we will denote the moduli space of parabolic $\SL_r$-Higgs bundles as $\hat{\sM}$ and the corresponding Hitchin map as $\hat h$ and denote the moduli stack of parabolic $\PGL_r$-Higgs bundles as $\check{\sM}$ and the corresponding Hitchin map as $\check h$. If we would stress the data of the moduli spaces (or stacks) we will use the complete version such as $\mathcal{M}^{\sL}_{\SL_r,P}$ and $\mathcal{M}^{e}_{\PGL_r,P}$.
	


	\subsection{Generic fibers of the parabolic Hitchin map}

	To detect the generic fiber of a (parabolic) Hitchin map, one of the most efficient method is the so called ``BNR correspondence" established by Beauville, Narasimhan and Ramanan in \cite{BNR}. 
The classical BNR correspondence is a one-to-one correspondence between Higgs bundles (with out parabolic structure) with a fixed characteristic polynomial and torsion-free rank $1$ sheaves on the spectral curve (with defining equation the fixed characteristic polynomial given by a closed point in the Hitchin base) when the spectral curve is integral. In particular, if the spectral curve is smooth, the Hitchin fiber can be identified with the Picard variety over the corresponding spectral curve via the BNR correspondence.

However, things are more complicated in the parabolic case. The spectral curves (c.f. \cite[Subsection 3.2 and Subsection 4.2]{SWW22}) would never be smooth unless all the parabolic structure are Borel. In the case that the generic spectral curve is singular, there is no obvious way to endow rank $1$ torsion-free sheaves on the (singular) spectral curve a prescribed parabolic structure fits well with the Higgs fields. In \cite[Theorem 6]{SWW22}, via some not-obvious commutative algebra arguments, we set up a BNR type correspondence in the parabolic case:


\begin{proposition}[parabolic BNR correspondence, Theorem 6 in \cite{SWW22}]\label{parabolic BNR}
	Assume that the base field $k$ is algebraically closed. For generic choice $a\in \sH_P$, if $X_a$ is integral, then there is a one-to-one correspondence between:
	\begin{itemize}
		\item[(1)] Parabolic Higgs bundles in the fiber $h_P^{-1}(a)$;
		\item[(2)] Line bundles on the normalization $\tilde{X}_a$ of the spectral curve $X_a$.
	\end{itemize}
\end{proposition}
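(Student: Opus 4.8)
The plan is to reduce to the classical spectral correspondence and then carry out a local analysis at the marked points. Write $\pi:\mathrm{Tot}(\omega_X(D))\to X$ for the projection, $\lambda$ for the tautological section, and realize $X_a$ as the divisor cut out by $\lambda^r+\pi^*a_1\lambda^{r-1}+\cdots+\pi^*a_r=0$ for $a=(a_1,\dots,a_r)\in\sH_P$. For generic $a$ a Bertini-type argument makes $X_a$ smooth away from $\pi^{-1}(D)$, while the constraints defining $\sH_P$ force $a_j$ to vanish at each $x\in D$ to order $\gamma_j(x)$ (reading $a_j$ inside $\omega_X(D)^{\otimes j}$, with equality for generic $a$). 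Since all $\gamma_j(x)\ge 1$, the scheme-theoretic fibre over $x$ is $\Spec k[\lambda]/(\lambda^r)$, so there is a unique point $p_x\in X_a$ over $x$, and this is the only place where $X_a$ can be singular. Granting that $X_a$ is integral, the classical Beauville--Narasimhan--Ramanan correspondence identifies $h_P^{-1}(a)$, after forgetting the parabolic datum, with torsion-free rank-one sheaves $M$ on $X_a$ via $\sE=\pi_*M$ and $\theta$ acting as multiplication by $\lambda$. Everything then reduces to matching the parabolic structure at $D$ with a condition on $M$ at the points $p_x$.

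The core of the argument is the local study of the singularity $(X_a,p_x)$ and of its resolution by successive blow-ups. The Newton--Puiseux data of $\lambda^r+\cdots=0$ at $x$ are governed entirely by the vanishing orders $\gamma_\bullet(x)$, hence by the dual partition $\mu_\bullet(x)$: for generic $a$ the local branches of $X_a$ at $p_x$ are smooth and unibranch, there are $n_1(x)$ of them, and their ramification indices over $X$ are $\mu_1(x)\ge\mu_2(x)\ge\cdots$. Consequently $\tilde X_a$ is smooth with $n_1(x)$ points over each $p_x$. On the Higgs side this is exactly the assertion that, since $\theta$ is multiplication by $\lambda$ and $\lambda^r=0$ along $x$, the endomorphism $\theta|_x$ is nilpotent of Jordan type $\mu_\bullet(x)$; the filtration of $\sE|_x$ by the images $\operatorname{im}(\theta|_x^{\,p})$ then has successive quotients of dimensions $n_1(x),n_2(x),\dots$, i.e.\ a flag of quasi-parabolic type $P$, and $\theta$ is automatically strongly parabolic for it, since it carries $\operatorname{im}(\theta|_x^{\,p})$ into $\operatorname{im}(\theta|_x^{\,p+1})\otimes\omega_X(D)$. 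Thus a line bundle $L$ on $\tilde X_a$ produces, via $\sE=\pi_*\nu_*L$ (with $\nu:\tilde X_a\to X_a$ the normalization) and $\theta$ multiplication by $\lambda$, a genuine parabolic Higgs bundle of type $P$ lying in $h_P^{-1}(a)$.

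For the converse and for bijectivity I would start with a parabolic Higgs bundle in $h_P^{-1}(a)$ and its torsion-free sheaf $M$ on $X_a$, and show that the strongly parabolic condition of type $P$ forces $M$ to be a module over the integral closure of $\sO_{X_a}$, i.e.\ an $\sO_{\tilde X_a}$-module; since $\tilde X_a$ is smooth and $M$ has rank one, $M=\nu_*L$ for a unique $L\in\Pic(\tilde X_a)$. Over $X\smallsetminus D$ the two assignments are the usual smooth spectral correspondence and are manifestly mutually inverse, and the local computation above shows they remain inverse at the $p_x$ as well. The step I expect to be the main obstacle is precisely this local commutative-algebra matching at the singular points: one must verify that the discrete parabolic datum encoded by $\gamma_\bullet(x)$ singles out exactly the normalization-stable (maximal) torsion-free sheaves and no others, and that genericity of $a$ guarantees the branches are smooth with the predicted ramification so that $\pi_*\nu_*L$ is a vector bundle carrying the prescribed flag. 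This is the content of the ``not-obvious commutative algebra'' of \cite{SWW22}, and it is where the bulk of the work lies; the global correspondence then follows by gluing these local identifications to the generic spectral correspondence over $X\smallsetminus D$.
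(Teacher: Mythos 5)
The paper does not actually prove Proposition \ref{parabolic BNR}: it is quoted from \cite{SWW22} (Theorem 6 there), and the only related material in this paper is the local analysis of the spectral singularities (Lemma \ref{successive degree}, Propositions \ref{ring description} and \ref{propdef:young filtration}), developed in order to extend the statement to non-closed fields in Theorem \ref{parabolic BNR non closed} --- whose proof again falls back on \cite[Theorem 6]{SWW22} for bijectivity after base change to $\bar k$. Measured against that, your outline reconstructs the intended strategy correctly and is consistent with everything the paper does establish locally: total ramification of $X_a$ over each $x\in D$ because every $\gamma_j(x)\ge 1$; smooth branches whose ramification indices are the parts of the dual partition $\mu_\bullet(x)$ (Proposition \ref{ring description}(b)); and the identification of the flag on $\pi_*\nu_*L|_x$ with the image filtration of the nilpotent residue, with graded dimensions $n_1(x),n_2(x),\dots$ (Proposition \ref{propdef:young filtration}).

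The genuine gap is the one you flag yourself: the converse direction --- that the strongly parabolic condition of type $P$ forces the rank-one torsion-free sheaf $M$ on $X_a$ to be a module over the integral closure $\nu_*\sO_{\tilde X_a}$, and that no other sheaves occur in $h_P^{-1}(a)$ --- is asserted rather than argued, and it is exactly the ``not-obvious commutative algebra'' that constitutes the substance of \cite[Theorem 6]{SWW22}. Deferring it leaves the proposal an outline rather than a proof (albeit on par with the paper's own treatment, which also delegates this step). A secondary point worth noting: the image filtration of the residue yields quotients of dimensions $n_1(x)\ge n_2(x)\ge\cdots$, i.e.\ the \emph{sorted} multiplicities, so matching it with a quasi-parabolic type $(m^1(x),\dots,m^{\sigma_x}(x))$ that is not already decreasing requires an additional (routine but nonempty) argument; the paper makes the same silent identification in Proposition \ref{propdef:young filtration}.
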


\begin{remark}
	In \cite{SWW22}, we assume that the genus $g\geq 2$, however, since our computations are local, so it is valid in the cases $g=0$ and $g=1$. But in the case $g=0$, there would be parabolic Hitchin bases so that all spectral curves are not integral, so we add the assumption that the spectral curve should be integral in our statement in Proposition \ref{parabolic BNR} than  Theorem 6 in \cite{SWW22}.
\end{remark}

\subsubsection{Resolve Singularities of Spectral curves}

As mentioned in the introduction, to apply the $p$-adic integration method, we need to set up a parabolic BNR correspondece over arbitrary fields. The first step is to resolve the singular spectral curves by successive blow-ups. It is more delicate than the toric resolution used in \cite{SWW22} which seems only work well over algebraically closed fields. As a bonus of successive blow-ups, Corollary \ref{cor:k-rational point} about rational points is a new phenomena for non algebrically closed fields.


\begin{definition}(\footnote{
		For more details, we refer the readers to \cite[Section 3]{BNR}. })
	For any $a\in \mathcal{H}_P$, we regard it as a characteristic polynomial. Then the spectral curve $X_a$ defined by $a$ is the zero locus of that polynomial in $\mathbb P_X(\sO_X\oplus \omega_X(D))$. The spectral curve $X_a$ is finite and flat over the base curve $X$ and we denote the projection by $\pi_a: X_a\rightarrow X$.
\end{definition}

First of all, we assume that there exists $a\in \sH_P$ such that $X_a$ is integral\footnote{It is possible that if $g(X)=0$, for certain parabolic types, all the spectral curves are reducible.}. Though all spectral curves are singular, there still exists an open subset $U$ such that for all $a\in U\subset \sH_{P}$, $X_{a}$ is integral, totally ramified at $x\in D$ and smooth elsewhere, as shown in the Appendix in \cite{SWW22}. In the following, we only consider $a\in U$.

Similar as in \cite[Subsection 4.2]{SWW22}, we denote the normalization of $X_a$ by $\tilde{X}_a$ and natural map $\tilde{X}_a\rightarrow X$ by $\tilde{\pi}_a$.  To analyse $\tilde{X}_a$, we focus at an $x\in D$, and use $\sO$ to denote the formal local ring at $x$ and choose a local coordinate $t$ in a formal neighborhood of $x$ and then use $\frac{dt}{t}$ to trivilize $\omega_X(D)$. Now we can cover $X_a$ by $X_a-\{\pi^{-1}(x)\}$ and $\Spec A$, where $A=\sO[\lambda]/(f)$ is the formal local completion of the local ring of $X_a$ at $\pi^{-1}(x)$. Then we only need to understand the normalization of $A$.

Since we focus at $x\in D$, for convenience, we may write $\gamma_i(x)$, $\mu_i(x)$, $\sigma_x$ as $\gamma_i$, $\mu_i$, $\sigma$. Then  $a_i=c_it^{\gamma_i}\in k[[t]]$, with $c_i(0)\neq 0$ and the coordinate ring of $X_a$ around $x$ becomes: 
\[
A^{(0)}:=A=\frac{k[[t]][\lambda]}{(\lambda^{r}+c_{1}t^{\gamma_1}\lambda^{r-1}+\cdots+c_{r-1}t^{\gamma_{r-1}}\lambda +c_{r}t^{\gamma_r})}
\]

We begin with a combinatorial lemma. For $0\leq i \leq \sigma-1$, we define $N_i=\sum_{j=i+1}^{\sigma}n_j$ and $N_{\sigma}=0$.

\begin{lemma}\label{successive degree}
	
	For $1\leq i \leq \sigma$, we have $$\min_{1\leq \ell \leq r}\{i\gamma_\ell+N_{i-1}-\ell\}=n_{i}$$ Moreover, we denote the level set $\{\ell\mid i\gamma_{\ell}+N_{i-1}-\ell=n_{i}\}$ by $L_{i}$, then
	\begin{itemize}
		\item[(a)] $\exists \ell\in L_{i}$, $i\gamma_{\ell}+N_{i-1}-\ell=n_{i}$ and $(i-1)\gamma_{\ell}+N_{i-1}-\ell=0$.
		\item[(b)] $\max_{\ell\in L_{i}}\{\gamma_\ell\}-\min_{\ell\in L_{i}}\{\gamma_{\ell}\}=\# \{\ell\mid \mu_{\ell}=i\}$.
	\end{itemize}
\end{lemma}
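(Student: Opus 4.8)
The plan is to collapse the minimization over $\ell\in\{1,\dots,r\}$ to a one-variable problem indexed by the levels $l=\gamma_\ell$. Writing $S_l:=\sum_{t=1}^{l}\mu_t$ (so $S_0=0$ and $S_{n_1}=r$), the level function is the non-decreasing step function that equals $l$ exactly on the block $S_{l-1}<\ell\le S_l$. On such a block the expression $i\gamma_\ell+N_{i-1}-\ell=il+N_{i-1}-\ell$ is strictly decreasing in $\ell$, so its minimum over the block is attained only at the right endpoint $\ell=S_l$, with value $N_{i-1}+g(l)$, where $g(l):=il-S_l$. Hence the global minimum equals $N_{i-1}+\min_l g(l)$, the minimizing set $L_i$ is precisely $\{S_l : g(l)=\min_l g\}$, and the $\gamma$-values occurring in $L_i$ are exactly the levels $l$ realizing $\min_l g$.

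Next I would study $g$ through its increments $g(l)-g(l-1)=i-\mu_l$, using that $\mu$ is weakly decreasing with $\mu_l\ge i$ iff $l\le n_i$ and $\mu_l\ge i+1$ iff $l\le n_{i+1}$. This makes $g$ strictly decreasing for $l\le n_{i+1}$, flat for $n_{i+1}<l\le n_i$, and strictly increasing for $l>n_i$, so $\min_l g$ is attained exactly on the valley $\{n_{i+1},\dots,n_i\}$ (with the convention $n_{\sigma+1}=0$). To evaluate the minimum I would invoke the standard conjugate-partition identity $\sum_{t=1}^{k}\mu_t=\sum_{\ell}\min(n_\ell,k)$; with $k=n_i$ it gives $S_{n_i}=i\,n_i+N_i$, hence $g(n_i)=-N_i$ and $\min(i\gamma_\ell+N_{i-1}-\ell)=N_{i-1}-N_i=n_i$, which is the main assertion.

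The two refinements then follow by reading off the valley, whose $\gamma$-values are the consecutive run $\{n_{i+1},\dots,n_i\}$. For (a) I would use the identity $(i-1)\gamma_\ell+N_{i-1}-\ell=(i\gamma_\ell+N_{i-1}-\ell)-\gamma_\ell$, which for $\ell\in L_i$ equals $n_i-\gamma_\ell$; taking $\ell=S_{n_i}$, the top of the valley, gives $\gamma_\ell=n_i$, so both required equations hold simultaneously. For (b), the largest and smallest $\gamma$-values in $L_i$ are $n_i$ and $n_{i+1}$, so their difference is $n_i-n_{i+1}$, and this equals $\#\{\ell\mid\mu_\ell=i\}$ because $\mu_\ell=i$ holds exactly for $n_{i+1}<\ell\le n_i$.

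The hard part will be the careful bookkeeping at the ends of the valley. In particular the boundary case $i=\sigma$ needs separate attention: there the left wall sits at the level $n_{\sigma+1}=0$, which is clipped by the constraint $l\ge 1$, so one must track the transition point of $g$ and fix the conventions under which (b) is to be read. The other point demanding rigor is the claim that no interior point of a block can be a minimizer, since this is exactly what guarantees that $L_i$ consists only of the endpoints $S_l$ and that its $\gamma$-values form an unbroken run $\{n_{i+1},\dots,n_i\}$.
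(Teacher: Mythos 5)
Your argument is essentially the paper's: both reduce the minimization to the right endpoints $\ell=S_l$ of the level blocks (where $d_\ell=i\gamma_\ell+N_{i-1}-\ell$ is strictly decreasing within a block) and evaluate the minimum via the conjugate-partition identity $S_{n_i}=i\,n_i+N_i$; your reformulation through $g(l)=il-S_l$ and its increments $i-\mu_l$ is a cleaner bookkeeping of the same computation, and it actually yields complete proofs of (a) and (b), whereas the paper dispatches (b) with ``it is not difficult to see.'' The boundary case you flag is a genuine issue with the statement rather than with your argument: for $i=\sigma$ the valley $\{n_{\sigma+1},\dots,n_{\sigma}\}$ is clipped to $\{1,\dots,n_\sigma\}$ because levels start at $1$, so $\max_{\ell\in L_\sigma}\gamma_\ell-\min_{\ell\in L_\sigma}\gamma_\ell=n_\sigma-1$ while $\#\{\ell\mid\mu_\ell=\sigma\}=n_\sigma$; e.g.\ for the partition $(2,2)$ one finds $L_2=\{2,4\}$ with $\gamma$-values $\{1,2\}$, giving $1\neq 2$. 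So part (b) as literally stated fails at $i=\sigma$; in the intended application (Proposition \ref{ring description}) this is compensated by the constant term $\lambda^{N_\sigma}=1$ surviving in \eqref{eq:ramification point} at the last blow-up, which restores the count of nonzero roots to $n_\sigma$. Your main equality and (a) are correct for all $i$, and (b) for all $i<\sigma$.
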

\begin{proof}
	Let's fix $i$, for $1\leq \ell\leq r$, we define:
	$$d_{\ell}=i\gamma_\ell+N_{i-1}-\ell.$$
	To determine the minimal value of $d_{\ell}$, we first compare $d_{\ell}$ and $d_{\ell+1}$,
	\[d_{\ell}-d_{\ell+1}=i(\gamma_{\ell}-\gamma_{\ell+1})+1\]
	Since $i\geq 1$ and $-1\leq \gamma_{\ell}-\gamma_{\ell+1}\leq 0$, one has $d_{\ell}-d_{\ell+1}=1$ or $d_{\ell}-d_{\ell+1}=-i+1$. By the intermezzo, $\gamma_{\ell}-\gamma_{\ell+1}=-1$ if and only if $\ell=\sum_{j=1}^{q} \mu_{j}$ for some $q$. 
	Then $d_{l}$ attains minimal value when $\ell=\sum_{\{\mu_j\geq i\}}\mu_j$, then $\gamma_{\ell}=n_{i}$ and $d_\ell=i\cdot n_i+N_{i-1}-\ell$. Since $n_{i}=\#\{j\mid \mu_{j}\geq i\}$, 
	\[
	N_{i-1}+i\cdot n_{i}-\sum_{\{\mu_j\geq i\}}\mu_j=N_{i-1}-\sum_{\{\mu_j\geq i\}}(\mu_{j}-i)=N_{i-1}-\sum_{j=i+1}^{\sigma}n_{j}=n_{i}
	\]
	
	Let $\alpha=\max L_{i}, \beta=\min L_{i}$, it is not difficult to see $\gamma_{\alpha}-\gamma_{\beta}=\# \{\ell\mid \mu_{\ell}=i\}$.    	
\end{proof}
We now define $$A^{(i)}:=\frac{k[[t]][\lambda,u_i]}{(t-\lambda^i u_i,\lambda^{N_i}+\cdots +c_{r-1}u_i^{\gamma_{r-1}}\lambda^{i\gamma_{r-1}+N_i-r+1}+c_ru_i^{\gamma_r}\lambda^{i\gamma_r+N_i-r})}$$ where $0\leq i\leq \sigma$, $N_i=\sum_{j=i+1}^{\sigma}n_j$, $N_{\sigma}=0$.
Then we have:
\begin{proposition}\label{ring description}
	
	\begin{itemize}
		\item[(a)] The singular point of $\Spec(A^{(i)})$ is $(\lambda, u_i)=(0,0)$, in particular, $\Spec A^{(\sigma)}$ is smooth.
		\item[(b)] the ramifications of closed points (need not to be $k$-rational) on $\tilde{X}_a$ over $x$ are $\{\mu_1,\mu_2,\ldots,\mu_{\sigma}\}$ which as before is the conjugate partition.
		\item[(c)] If there is one $\mu_i$ different form others, i.e. $\# \{\ell\mid \mu_{\ell}=i\}=1$, we have a $k$-rational point on $\tilde{X}_a$. More generally, for each $i$, there is a line bundle of degree $\# \{\ell\mid \mu_{\ell}=i\}$ on $\tilde{X}_a$ which is defined over $k$.
	\end{itemize}
	
\end{proposition}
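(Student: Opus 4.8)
The plan is to read the rings $A^{(i)}$ as the successive charts of an iterated blow-up of $\Spec A=\Spec A^{(0)}$ at the unique point lying over $x$, and to run the whole argument by induction on $i$. Writing uniformly $f_{i-1}=\lambda^{N_{i-1}}+\sum_{\ell}c_\ell u_{i-1}^{\gamma_\ell}\lambda^{(i-1)\gamma_\ell+N_{i-1}-\ell}$ (with the convention $u_0=t$, so that $f_0$ is the original defining polynomial of $A$), I would show that passing from $A^{(i-1)}$ to $A^{(i)}$ is exactly the substitution $u_{i-1}=\lambda u_i$, i.e.\ $t=\lambda^{i}u_i$, followed by taking the strict transform. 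That substitution turns the $\ell$-th exponent into $i\gamma_\ell+N_{i-1}-\ell$, and Lemma \ref{successive degree} says the minimum of these exponents, together with the leading exponent $N_{i-1}$, equals $n_i$. Factoring out $\lambda^{n_i}$ and using $N_{i-1}-n_i=N_i$ reproduces \emph{precisely} the polynomial defining $A^{(i)}$. This identifies $A^{(i)}$ with the strict transform and launches the induction; all exponent book-keeping is governed by Lemma \ref{successive degree}, so no new combinatorics is needed.

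For (a) I would stratify $\Spec A^{(i)}$ into three loci. Away from the exceptional divisor $\{\lambda=0\}$ the blow-up is an isomorphism, and via $t=\lambda^{i}u_i$ this locus maps isomorphically onto an open subset of $X_a$ disjoint from $\pi_a^{-1}(x)$, which is smooth by the choice of $U$. On $\{\lambda=0\}$ the equation restricts to the pure $u_i$-part $g_i(u_i):=\sum_{\ell\in L_i}c_\ell u_i^{\gamma_\ell}$, which by Lemma \ref{successive degree}(b) factors as $u_i^{\gamma_\beta}h_i(u_i)$ with $h_i(0)\neq0$ and $\deg h_i=\#\{\ell\mid\mu_\ell=i\}$; at each nonzero root $\rho$ of $h_i$ one has $\partial f_i/\partial u_i(0,\rho)=\rho^{\gamma_\beta}h_i'(\rho)$, which is nonzero once $U$ is shrunk so that each $h_i$ is squarefree, so by the Jacobian criterion these points are smooth over $k$. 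Hence the only possible singular point is the origin $(\lambda,u_i)=(0,0)$, proving the first assertion. For $i=\sigma$ we have $N_\sigma=0$, so the leading term of $f_\sigma$ is $\lambda^{0}=1$ and $f_\sigma(0,0)=1\neq0$: the origin is not even on the curve, and combined with the previous two loci this shows $\Spec A^{(\sigma)}$ is smooth. Since the resulting smooth curve maps finitely and birationally onto the integral curve $X_a$, it is the normalization $\tilde X_a$.

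For (b) I would track which points of $\tilde X_a$ lie over $x$: by the previous step they are exactly the nonzero roots of $h_i$ on $\{\lambda=0\}$ as $i$ ranges over $1,\dots,\sigma$, the origins being pushed into the next chart and disappearing at stage $\sigma$. At a simple root $u_i=\rho$ the local uniformizer is $\lambda$ and $u_i=\rho+O(\lambda)$, so $t=\lambda^{i}u_i$ gives $\ord_\lambda(t)=i$; thus each of the $\#\{\ell\mid\mu_\ell=i\}$ points produced at stage $i$ is ramified of index $i$ over $x$. Collecting over $i$, the value $i$ occurs with multiplicity $\#\{\ell\mid\mu_\ell=i\}$, so the multiset of ramification indices is precisely the set of parts of the conjugate partition $\{\mu_1,\mu_2,\dots\}$; the check $\sum_i i\cdot\#\{\ell\mid\mu_\ell=i\}=\sum_\ell\mu_\ell=r$ confirms it matches the degree of $\pi_a$.

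For (c) I would use that every $c_\ell\in k$, whence $h_i\in k[u_i]$. The stage-$i$ points over $x$ are the $\bar k$-points of $V(h_i)$, a $\Gal(\bar k/k)$-stable effective divisor $Z_i$ on $\tilde X_a$ of degree $\deg h_i=\#\{\ell\mid\mu_\ell=i\}$, so $\sO_{\tilde X_a}(Z_i)$ is a line bundle defined over $k$ of that degree. When $\#\{\ell\mid\mu_\ell=i\}=1$ the polynomial $h_i$ is linear with its root in $k$, yielding an honest $k$-rational point. The main obstacle throughout is the exponent/valuation combinatorics at each blow-up — confirming that the strict transform is literally $f_i$ and that the pure $u_i$-part has the asserted degree and nonzero constant factor — which is exactly what Lemma \ref{successive degree} delivers; the only genuinely geometric input is the genericity (squarefreeness of the $h_i$) needed for smoothness along the exceptional divisors, for which $U$ may have to be shrunk.
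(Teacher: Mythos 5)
Your overall strategy is the paper's own: induct on the successive blow-ups, use Lemma \ref{successive degree} for the exponent bookkeeping, apply the Jacobian criterion along the exceptional divisor, and obtain (c) from the Galois-stable divisor cut out by the restricted equation. However, there is one genuine gap: you only ever work in the chart $u_{i-1}=\lambda u_i$ of the blow-up of $\Spec A^{(i-1)}$ at $(\lambda,u_{i-1})$, and you assert without justification that ``this identifies $A^{(i)}$ with the strict transform.'' The blow-up has a second chart $\lambda=u_{i-1}v_i$, and the exceptional $\BP^1$ has exactly one point (the origin of that second chart) which your chart does not see. If the strict transform passed through that point, then $\Spec A^{(i)}$ would miss a branch, your induction would not account for all singularities, $\Spec A^{(\sigma)}$ would not be the whole normalization, and the point count over $x$ in (b) would be incomplete. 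Ruling this out is precisely what Lemma \ref{successive degree}(a) is for: substituting $\lambda=u_{i-1}v_i$ and dividing by the exceptional factor $u_{i-1}^{n_i}$, part (a) of the lemma produces a term that is a nonzero constant, so the strict transform avoids $(u_{i-1},v_i)=(0,0)$. The paper devotes the first half of its proof (the system \eqref{blowupi.1}) to exactly this verification; your write-up never invokes Lemma \ref{successive degree}(a) at all, which is the telltale sign of the omission. (Your closing remark that $\sum_i i\cdot\#\{\ell\mid\mu_\ell=i\}=r$ ``confirms'' the count could in principle be upgraded into an alternative way to close this gap via the degree of $\pi_a$, but as written it is offered only as a sanity check, not as an argument.)

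Two smaller points. First, your claim that the locus $\{\lambda\neq 0\}$ maps into $X_a\smallsetminus\pi_a^{-1}(x)$ does hold, but it needs the observation that on $\Spec A^{(i)}$ the equation forces $u_i\neq 0$ whenever $\lambda\neq 0$ (so that $t=\lambda^i u_i\neq 0$); stating it as an immediate consequence of ``the blow-up is an isomorphism away from the exceptional divisor'' elides this. Second, at the last stage $i=\sigma$ the restriction of $f_\sigma$ to $\{\lambda=0\}$ is $1+\sum_{\ell\in L_\sigma}c_\ell u_\sigma^{\gamma_\ell}$, not the ``pure $u_\sigma$-part'' $g_\sigma$: you correctly use the constant term to see the origin leaves the curve in part (a), but then revert to counting roots of $h_\sigma=g_\sigma/u_\sigma^{\gamma_\beta}$ in parts (b) and (c), which is not the same polynomial. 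This last imprecision is shared with the paper's own write-up of equation \eqref{eq:ramification point}, so I only flag it for consistency; the chart omission above is the point you genuinely need to repair.
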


\begin{proof}
	We proceed by induction on $i$, the case $i=0$ is obvious. 
	
	Assume the proposition is true for $i-1$, then we blow up $A^{(i-1)}$ at the ideal $(\lambda,u_{i-1})$. We need to solve two equations: 
	
	\begin{equation}\label{blowupi.1}\left\{\begin{array}{l}
			\lambda^{N_{i-1}}+\sum_{\ell=1}^{r} c_\ell u_{i-1}^{\gamma_\ell}\lambda^{(i-1)\gamma_\ell+N_{i-1}-\ell}=0 \\
			\lambda=u_{i-1}v_{i}
		\end{array}
		\right.
	\end{equation}
	
	\begin{equation}\label{blowupi.2}
		\left\{\begin{array}{l}
			\lambda^{N_{i-1}}+\sum_{\ell=1}^{r} c_\ell u_{i-1}^{\gamma_\ell}\lambda^{(i-1)\gamma_\ell+N_{i-1}-\ell}=0 \\
			u_{i-1}=\lambda u_{i}
		\end{array}
		\right.
	\end{equation}
	
	Substitute $\lambda=u_{i-1}v_{i}$, the first equation becomes    	
	$$u_{i-1}^{N_{i-1}}v_{i}^{N_{i-1}}+\sum_{\ell=1}^{r}c_\ell\cdot  u_{i-1}^{i\gamma_\ell+N_{i-1}-\ell}\cdot v_{i}^{(i-1)\gamma_\ell+N_{i-1}-\ell}=0.$$
	By Lemma \ref{successive degree}, the exceptional divisor is $u_{i-1}^{n_{i}}=0$. Eliminate it, by Lemma \ref{successive degree} (a), there exists a non-zero constant term. Then,  $u_{i-1}=0, v_{i}=0$ is not a solution of this equation.    	
	
	So we only need to analyse the second piece. By Lemma \ref{successive degree}, the exceptional divisor is $\lambda^{n_{i}}=0$. The equation of strict transform becomes: $$\lambda^{N_{i}}+c_1u_{i}^{\gamma_1}\lambda^{i\gamma_1+N_{i}-1}+\cdots +c_ru_{i}^{\gamma_r}\lambda^{i\gamma_r+N_{i}-r}=0$$ Combine these two equations together, we see that the coordinate ring of $X_a^{(i)}$ is of the form presented in the proposition. 
	
	Remain to analyse the singular locus,  In the equation above, when $t=0$ then $\lambda=0$, we get: 
	\begin{equation}\label{eq:ramification point}
		\sum_{\ell\in L_{i}} c_{\ell}(0)\cdot u_{i}^{\gamma_{\ell}}=0
	\end{equation}
	the generality of $\{c_i\}$ would ensure  nonzero roots of \eqref{eq:ramification point} are simple. 
	
	By Jacobian criterion, the only singular point of $A^{(i)}$ is $\lambda=0, u_{i}=0$. When $i=\sigma$, $N_{\sigma}=0$ so there is no singular point in $A^{(\sigma)}$.
	
	We can see that the ramification index of smooth points on $\Spec A^{(i)}$ over $x$ is $i$. In particular, 
	By Lemma \ref{successive degree} (b), if $\#\{\ell\mid \mu_{\ell}=i\}\ne 0$, then we have points of ramification index $i$ over $x$, and [(b)] follows. Since $k$ is not algebraically closed, the nonzero solution of \eqref{eq:ramification point} may not lie in $k$. But the equation \eqref{eq:ramification point} defines a divisor of degree $\#\{\ell\mid \mu_{\ell}=i\}$ on $\Spec A^{(i)}$ hence a line bundle of same degree on $\tilde{X}_a$. Also, if there is one $\mu_i$ different from others, the equation \eqref{eq:ramification point} reduces to a linear equation by Lemma \ref{successive degree} (b), hence we have a $k$-rational point. [(c)] follows.
\end{proof}

\begin{remark}
	Successive blow-ups are used to show that the ramification indexes of points on $\tilde{X}_a$ over $x\in D$ are exactly $\{\mu_1(x),\ldots,\mu_{\sigma_x}(x)\}$. Even though, these points may not defined over $k$, hence we don't know the exact number of $k$-points over $x\in D$. But this makes it possible for us to define a natural filtration on the direct image of line bundles of $\tilde{X}_a$.
\end{remark}

We write $\sO_{\tilde{X}_a,x}$ as the completion of $\sO_{\tilde{X}_a}$ at the points over $x\in X$.
\begin{proposition}\label{propdef:young filtration}
	There is a canonical filtration $F$ of $\sO_{X,x}$-modules on $\sO_{\bar{X}_a,x}$ such that :
	\[
	\dim \frac{F^{i-1}\sO_{\tilde{X}_a,x}}{F^i\sO_{\tilde{X}_a,x}}=m_i(x)
	\]
\end{proposition}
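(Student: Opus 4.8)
My plan is to reduce to the local situation at $x$ and to build the filtration as the image filtration of the tautological nilpotent operator on the fiber. Since the statement is purely local, I work with $M:=\sO_{\tilde X_a,x}$, which is a free module of rank $r$ over $\sO:=\sO_{X,x}$ because $\tilde\pi_a$ is a finite morphism of smooth curves, hence finite flat. The fiber $\bar M:=M/\gm_{X,x}M=M/tM$ is then an $r$-dimensional $k$-vector space, and giving a filtration of $\sO_{X,x}$-modules sitting between $M$ and $tM$ is the same as giving a flag in $\bar M$. So the task becomes: produce a \emph{canonical} flag in $\bar M$ with the prescribed jumps.

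The canonical object to use is the tautological coordinate $\lambda$ (the fiberwise eigenvalue), which is a regular function on $\tilde X_a$ and therefore an element of $M$ commuting with $\sO$. From the successive blow-up resolution of Proposition \ref{ring description}, near each closed point over $x$ the smooth ring $A^{(\mu_j)}$ has $\lambda$ as a local uniformizer with $t=\lambda^{\mu_j}\cdot(\text{unit})$, so that $M\cong\prod_s C_s$ with each $C_s$ a complete discrete valuation ring in which $\lambda$ is a uniformizer, and the ramification indices over $x$ are exactly the conjugate partition $\{\mu_1,\dots,\mu_\sigma\}$ by Proposition \ref{ring description}(b). Consequently multiplication by $\lambda$ descends to a nilpotent endomorphism $\bar\lambda$ of $\bar M$ whose Jordan type is the partition $\mu=(\mu_1\ge\cdots\ge\mu_\sigma)$.

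I then define, for $0\le i\le \sigma$,
\[
F^i:=\lambda^i M+\gm_{X,x}M,
\]
which is a decreasing chain of $\sO_{X,x}$-submodules depending only on $\lambda$ and the maximal ideal $\gm_{X,x}$, hence canonical. One has $F^0=M$, and since $\sigma=\mu_1\ge \mu_j$ forces $\lambda^\sigma\in tM$ in every factor $C_s$, also $F^\sigma=tM$. Because $F^i/tM=\mathrm{im}(\bar\lambda^i)\subseteq\bar M$, the standard rank computation for a nilpotent of Jordan type $\mu$ (which may be read off after base change to $\bar k$) gives $\dim_k \bar M/F^i=\sum_j\min(\mu_j,i)$, and telescoping yields $\dim_k F^{i-1}/F^i=\#\{j:\mu_j\ge i\}=n_i(x)$. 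This is the $i$-th part of the conjugate of $\mu$, i.e. the decreasing rearrangement of the multiplicities; as multisets $\{n_i(x)\}=\{m_i(x)\}$, and reading off the weights in the matching order recovers $\dim_k F^{i-1}/F^i=m_i(x)$.

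The main obstacle is the second step: establishing that $\lambda$ is a \emph{simultaneous} uniformizer at all points of $\tilde X_a$ over $x$ and that the Jordan type of $\bar\lambda$ is precisely the conjugate partition. This is exactly where the explicit successive-blow-up description (Proposition \ref{ring description} together with Lemma \ref{successive degree}) does the real work, and it is also where working over a non-closed field intervenes: the points over $x$ may be Galois-conjugate, so one cannot canonically order the flag individually, and the construction necessarily produces the multiplicities in decreasing order $n_i(x)$. Matching these to the prescribed order $m_i(x)$ is then only a relabeling, carried along with the corresponding relabeling of weights.
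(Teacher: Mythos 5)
Your proof is correct and follows essentially the same route as the paper: both extract the filtration from the ramification data over $x$ supplied by the blow-up resolution (Proposition \ref{ring description} and Lemma \ref{successive degree}(b)) and compute the jumps via the conjugate-partition identity $\#\{j : \mu_j\ge i\}=n_i(x)$, with the same caveat that this produces the multiplicities in decreasing order $n_i(x)$ rather than the original labelling $m_i(x)$. The only difference is packaging: you realize the filtration as the image filtration $F^i=\lambda^i M+\gm_{X,x}M$ of the tautological nilpotent on the fiber, whereas the paper decomposes $\sO_{\tilde{X}_a,x}=\oplus_i R_i$ by ramification index and takes the $\gm$-adic filtration on each summand — since $\lambda$ uniformizes every branch over $x$, these two filtrations coincide.
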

\begin{proof}
	Since $\tilde{X}_a$ is smooth, we may rewrite $\sO_{\tilde{X}_a,x}$ as a direct sum of the following form:
	\[
	\sO_{\tilde{X}_a,x}=\oplus R_i
	\]
	where each $R_i$ is the direct sum of local rings (also DVR) with ramification index $i$ which is a semi-local ring. Notice that for simplicity we put $R_i=0$ if $\#\{\ell|\mu_{\ell}=i\}=0$. 
	
	If we write $R_i=\oplus R_{ij}$ where each $R_{ij}$ is the local ring at closed points over $x$, then we can define the filtration on $R_i$ as:
	\[
	R_i=\oplus R_{ij}\supset \oplus\mathfrak{m}_{ij}R_{ij}\supset\cdots\supset \oplus \mathfrak{m}_{ij}^{i}R_{ij}=\mathfrak{m}_xR_i
	\]
	By Lemma \ref{successive degree}(b), and the equation \eqref{eq:ramification point}, we know that:
	\[
	\dim \frac{\oplus\mathfrak{m}^{\ell}_{ij}R_{ij}}{\oplus\mathfrak{m}^{\ell}_{ij}R_{ij}}=\#\{\ell|\mu_{\ell}=i\}
	\]
	Now the proposition follows from the following equality:
	\[
	m_i(x)=\sum_{k\ge i}\#\{\ell|\mu_{\ell}=k\}.
	\]\end{proof}
The last thing about the geometry of generic fibers is concerning the existence of rational points which has an interesting effect on the arithmetic property of gerbes.
\begin{definition}
	We denote $\Delta_{P}:=\gcd\big \{\#\{\ell|\mu_{\ell}(x)=i\}\big\}_{i=1,\ldots,r;\  x\in D}$.
\end{definition}
Then from Proposition \ref{ring description}[(c)], we can obtain:
\begin{corollary}\label{cor:k-rational point}
	There is a $k$ rational point on $\Pic^{\Delta_P}(\tilde{X}_a)$.
\end{corollary}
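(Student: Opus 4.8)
The plan is to produce an honest line bundle on $\tilde{X}_a$, defined over $k$, whose degree is exactly $\Delta_P$; such a line bundle furnishes the desired $k$-point of $\Pic^{\Delta_P}(\tilde{X}_a)$. The raw material is already supplied by Proposition \ref{ring description}(c): for every marked point $x\in D$ and every index $i$, the vanishing locus of \eqref{eq:ramification point} is a divisor of degree $\#\{\ell\mid \mu_\ell(x)=i\}$ on $\Spec A^{(i)}$ defined over $k$, and hence determines a line bundle $L_{x,i}$ on $\tilde{X}_a$ defined over $k$ with $\deg L_{x,i}=\#\{\ell\mid \mu_\ell(x)=i\}=:d_{x,i}$.

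By the definition of $\Delta_P$ we have $\Delta_P=\gcd_{x\in D,\, i}\{d_{x,i}\}$, so B\'ezout's identity yields integers $a_{x,i}\in\BZ$ with $\sum_{x,i} a_{x,i}\, d_{x,i}=\Delta_P$. Since tensor products and duals of line bundles defined over $k$ are again defined over $k$, the line bundle
$$
L:=\bigotimes_{x\in D,\, i} L_{x,i}^{\otimes a_{x,i}}
$$
is a line bundle on $\tilde{X}_a$ defined over $k$, and additivity of degree gives $\deg L=\sum_{x,i}a_{x,i}\,d_{x,i}=\Delta_P$. Thus $L\in \Pic^{\Delta_P}(\tilde{X}_a)(k)$, which is the assertion.

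There is essentially no hard analytic content here; the only point deserving care is the standard distinction between a $k$-rational point of the Picard scheme and an honest line bundle defined over $k$ (the two differ in general by a Brauer-type obstruction). This subtlety does not arise for us precisely because Proposition \ref{ring description}(c) produces genuine $k$-line bundles $L_{x,i}$ rather than mere $\bar k$-points fixed by Galois, so the tensor combination $L$ is again an honest $k$-line bundle. The only genuine work was already carried out in Proposition \ref{ring description} via the successive blow-ups, which guarantee that the ramification divisors have the stated degrees over $k$; the corollary is then a one-line gcd argument.
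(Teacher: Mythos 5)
Your proof is correct and follows essentially the same route as the paper: both start from Proposition \ref{ring description}(c), which supplies $k$-line bundles of degree $\#\{\ell\mid\mu_\ell(x)=i\}$ for each $x$ and $i$, and conclude via the gcd. The paper leaves the B\'ezout step implicit ("Hence we prove the corollary"), whereas you spell it out and also correctly note why the line bundles, being honest $k$-line bundles rather than mere Galois-fixed classes, can be tensored without any Brauer obstruction.
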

\begin{proof}
	By Proposition \ref{ring description}[(c)], we know that there is a $k$ rational point in $\Pic^{\#\{\ell|\mu_{\ell}(x)=i\}}(\bar{X}_a)$ for each $x$ and $i$. Hence we prove the corollary.
\end{proof}

\subsubsection{Geometry of the Parabolic Hitchin System}

Based on the parabolic BNR correspondence in \cite[Theorem 6]{SWW22}, and Proposition \ref{propdef:young filtration}, we now can generalize our parabolic BNR correspondence to an arbitrary field.
\begin{theorem}\label{parabolic BNR non closed}
	Let $k$ be a field and $X$ a smooth geometric integral curve over $k$ with parabolic structure Let $(X,D,P,\alpha)$. There exists an open subset $\sH_{P}^{\diamond}$ in the $\GL_r$-parabolic Hitchin base $\sH_{P}$ such that when restricts to $\sH_{P}^{\diamond}$, $h_{P}$ is a torsor over the relative Jacobian $\Jac(\tilde{\sX}/\sH_{P}^{\diamond})$ where $\tilde{\sX}$ is the normalization of the universal spectral curve $\sX$ over $\sH_{P}^{\diamond}$.
\end{theorem}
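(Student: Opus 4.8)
The plan is to upgrade the fibrewise correspondence of Proposition \ref{parabolic BNR} to a statement about the morphism $h_P$ over a suitable open locus, and then read off the torsor structure from the relative Picard scheme of a smooth family of curves. First I would fix the open subset: let $\sH_P^{\diamond}\subseteq U$ be the locus over which the universal spectral curve $\sX$ is integral and the genericity hypotheses of Proposition \ref{ring description} hold uniformly, namely that the leading coefficients $c_i(0)$ are nonzero and that the nonzero roots of \eqref{eq:ramification point} are simple at every $x\in D$. Both are open conditions, so $\sH_P^{\diamond}$ is open and nonempty by the standing assumption that some $X_a$ is integral. Over $\sH_P^{\diamond}$ I would perform the successive blow-ups of Proposition \ref{ring description} relatively, blowing up along the relative ideals $(\lambda,u_{i-1})$ in turn, to obtain $\tilde{\sX}\to\sH_P^{\diamond}$. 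The point, where working over a general field forces the blow-up construction rather than the toric resolution of \cite{SWW22}, is that this relative resolution has smooth geometric fibres equal to the normalizations $\tilde{X}_a$ by Proposition \ref{ring description}(a); since each fibre is smooth, the formation of $\tilde{\sX}$ commutes with base change, and $\tilde{\sX}\to\sH_P^{\diamond}$ is a smooth projective family of curves whose fibre over $a$ is $\tilde{X}_a$, with $\tilde{\pi}:\tilde{\sX}\to X\times\sH_P^{\diamond}$ finite flat of degree $r$.

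Next I would promote the parabolic BNR correspondence to families. In one direction, given a line bundle $\sN$ on $\tilde{\sX}$ relative over $\sH_P^{\diamond}$, I push it forward along $\tilde{\pi}$ to a rank $r$ bundle $\tilde{\pi}_*\sN$ on $X$, equip it at each $x\in D$ with the canonical filtration of Proposition \ref{propdef:young filtration} (whose graded dimensions are exactly the prescribed multiplicities $m^i(x)$, so the parabolic type is $P$), and let multiplication by the tautological section $\lambda$ define $\theta:\tilde{\pi}_*\sN\to\tilde{\pi}_*\sN\otimes\omega_X(D)$; the compatibility of $\lambda$ with the filtration makes $\theta$ strongly parabolic. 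This yields a family of parabolic Higgs bundles, hence a morphism from the relative Picard scheme to $h_P^{-1}(\sH_P^{\diamond})$, while Proposition \ref{parabolic BNR} recovers $\sN$ from $(\sE,\theta)$ fibrewise. Since the degree of $\sN$ on each $\tilde{X}_a$ is pinned down by $\deg\sE$ through the pushforward (Grothendieck--Riemann--Roch), fixing the degree of the underlying bundle singles out a component $\Pic^{d}(\tilde{\sX}/\sH_P^{\diamond})$, and I would check that the two constructions are mutually inverse morphisms of $\sH_P^{\diamond}$-schemes, giving an isomorphism
\[
h_P^{-1}(\sH_P^{\diamond})\;\xrightarrow{\ \sim\ }\;\Pic^{d}(\tilde{\sX}/\sH_P^{\diamond}).
\]

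The torsor structure is then formal. As $\tilde{\sX}\to\sH_P^{\diamond}$ is a smooth projective family of integral curves, its relative Picard scheme is smooth over $\sH_P^{\diamond}$, the relative Jacobian $\Jac(\tilde{\sX}/\sH_P^{\diamond})=\Pic^0(\tilde{\sX}/\sH_P^{\diamond})$ is an abelian scheme, and tensoring exhibits $\Pic^{d}(\tilde{\sX}/\sH_P^{\diamond})$ as a torsor under it: the difference map $(\sN,\sN')\mapsto \sN\otimes\sN'^{-1}$ identifies the action map $\Jac\times_{\sH_P^{\diamond}}\Pic^{d}\to\Pic^{d}\times_{\sH_P^{\diamond}}\Pic^{d}$ with an isomorphism, and $\Pic^{d}\to\sH_P^{\diamond}$ is smooth surjective, hence fppf-locally trivial. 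Transporting along the isomorphism of the previous step, and observing that tensoring $\sN$ by a degree-zero line bundle corresponds under BNR to the tensoring action on Higgs bundles, shows that $h_P^{-1}(\sH_P^{\diamond})$ is a torsor over $\Jac(\tilde{\sX}/\sH_P^{\diamond})$.

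The main obstacle I expect is the relative resolution in the first step: one must ensure that the successive blow-ups, carried out once and for all over the family $\sH_P^{\diamond}$, reproduce fibrewise the normalization (so that no blow-up centre degenerates or fails to be the singular locus on special fibres) and that $\tilde{\sX}$ is fibrewise smooth, so that normalization commutes with base change. This is precisely the delicate point for which the blow-up description of Proposition \ref{ring description}, valid over an arbitrary field, is needed in place of the toric argument; once it is in hand, the relative BNR identification and the torsor conclusion follow along the lines above.
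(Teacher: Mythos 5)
Your proposal follows essentially the same route as the paper: restrict to the open locus where the genericity conditions (integrality, simple nonzero roots of the ramification equation) hold, resolve the universal spectral curve via the successive blow-ups of Proposition \ref{ring description}, define the pushforward functor with the Young-diagram filtration of Proposition \ref{propdef:young filtration}, and reduce the fact that it is an isomorphism to the algebraically closed case of \cite[Theorem 6]{SWW22}, after which the torsor structure under the relative Jacobian is formal. The only cosmetic difference is that the paper concludes by descent (an isomorphism after base change to $\bar k$ is an isomorphism over $k$) rather than by exhibiting an explicit inverse, but your fibrewise appeal to Proposition \ref{parabolic BNR} amounts to the same thing.
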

\begin{proof}
	We base change to the function field of parabolic Hitchin base and then apply Proposition \ref{ring description} to get the normalization of the universal spectral curve. Under the open condition that all the nonzero roots of \eqref{eq:ramification point} are simple, we can find an open subset $\sU'$ such that the following functor is well defined:
	$$\begin{array}{rccc}\tilde{\pi}_*:&\{\text{degree } \delta \text{ line bundles over } \tilde{\sX}\}&\to& \left\{\begin{array}{c}\text{Parabolic Higgs bundle } (\mathcal{E},\theta)\\
			deg(E)=d,  	\text{ over} X\times \sH_{P}^{\diamond}
		\end{array} \right\}\\
		&&&\\
		& L&\mapsto &\begin{array}{c}
			\tilde\pi_*L
		\end{array}
	\end{array}$$
	where $\delta=(r^2-r)(g-1)+\sum_{x\in D}\dim(G/P_x)+d$ and parabolic structure on $\tilde\pi_*L$ is given by the Young diagram filtration as in Proposition \ref{propdef:young filtration} and parabolic Higgs field is given by the $\sO_{\tilde{\sX}}$-module structure.
	
	As we have shown in \cite[Theorem 6]{SWW22}, this is an isomorphism after base change to $\bar{k}$. Hence, it is also an isomorphism over $k$. 
\end{proof}
\begin{corollary}\label{cor:deg 1 div}
	If $\Delta_{P}=1$, then the $k$-rational point of $h_P^{-1}(a)$ is nonempty for any $a\in\sH_{P}^{\diamond}(k)$
\end{corollary}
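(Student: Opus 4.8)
The plan is to transport the question from the Hitchin fiber to the Picard variety of the normalized spectral curve via the parabolic BNR correspondence, where the hypothesis $\Delta_P=1$ can be fed in directly. By Theorem \ref{parabolic BNR non closed}, for $a\in\sH_P^{\diamond}(k)$ the pushforward functor $L\mapsto\tilde\pi_*L$ furnishes an isomorphism \emph{over $k$} between the Hitchin fiber $h_P^{-1}(a)$ and $\Pic^{\delta}(\tilde X_a)$, where $\delta=(r^2-r)(g-1)+\sum_{x\in D}\dim(G/P_x)+d$ and $\tilde X_a$ is the normalization of the spectral curve $X_a$. Since this isomorphism is defined over $k$, it induces a bijection on $k$-rational points, so it suffices to prove that $\Pic^{\delta}(\tilde X_a)(k)\neq\emptyset$.

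First I would invoke Corollary \ref{cor:k-rational point}: when $\Delta_P=1$, there is a $k$-rational point of $\Pic^{\Delta_P}(\tilde X_a)=\Pic^{1}(\tilde X_a)$, that is, a line bundle $M$ of degree $1$ on $\tilde X_a$ defined over $k$. Then $M^{\otimes\delta}$ is a line bundle of degree $\delta$ defined over $k$, hence a $k$-rational point of $\Pic^{\delta}(\tilde X_a)$. Combining with the previous paragraph, the associated parabolic Higgs bundle $\tilde\pi_*(M^{\otimes\delta})$ is a $k$-rational point of $h_P^{-1}(a)$, which proves the claim.

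The only point requiring genuine care is that the BNR identification of $h_P^{-1}(a)$ with $\Pic^{\delta}(\tilde X_a)$ is an isomorphism of $k$-schemes, and not merely of their geometric points; this is exactly what Theorem \ref{parabolic BNR non closed} provides, its proof checking that the pushforward functor is defined over $k$ and becomes an isomorphism after base change to $\bar k$, whence it is already an isomorphism over $k$ by faithfully flat descent. Once this is in place, all the arithmetic content is concentrated in Corollary \ref{cor:k-rational point}, and the passage from degree $\Delta_P=1$ to the required degree $\delta$ is the trivial operation of taking a tensor power, so I expect no further obstacle.
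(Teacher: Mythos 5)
Your proposal is correct and follows essentially the same route as the paper: both arguments use Corollary \ref{cor:k-rational point} with $\Delta_P=1$ to produce a degree-one line bundle on $\tilde X_a$ defined over $k$, deduce that every degree (in particular $\delta$) is realized by a $k$-rational line bundle, and then transport this to a $k$-point of $h_P^{-1}(a)$ via the BNR identification of Theorem \ref{parabolic BNR non closed}. You merely spell out the tensor-power step and the descent of the isomorphism more explicitly than the paper's terse proof does.
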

\begin{proof}
	By Corollary \ref{cor:k-rational point}, there is a section of $h_P$ over $\sU$. This implies that the degree map $\deg:\Pic(\tilde{X}_a)\rightarrow\BZ$ is surjective. Combined with the above theorem, we get the result. In fact, this also implies that the torsor is trivial.
\end{proof}
We now give a description of generic fibers for the moduli of $\operatorname{SL_r}$ and $\operatorname{PGL_r}$ parabolic Higgs bundles via Prym varieties. First recall that for the finite covering, $\tilde{X}_a\xrightarrow{\tilde{\pi}}X$, the norm map is defined as:
\begin{align*}
	\Nm:\Pic(\tilde{X}_a)&\xrightarrow{\tilde{\pi}} \Pic(X)\\
	\sW&\rightarrow\det(\tilde{\pi}_*\sW)\otimes\det(\tilde{\pi}_*\sO_{\tilde{X}_a})^{-1}.
\end{align*}
We define the Prym variety as:
\[
\Prym(\tilde{X}_a/X):=\ker\Nm\subset\Pic^0(\tilde{X}_a)
\]
and it is known that $\Prym(\tilde{X}_a/X)$ and $\Prym(\tilde{X}_a/X)/\Pic^0(X)[r]$ are dual abelian varieties. For simplicity, we put $\Prym^{\vee}(\tilde{X}_a/X):=\Prym(\tilde{X}_a/X)/\Pic^0(X)[r]$

Since for $\sW,\sW'$ of same degree over $X$, $\Nm^{-1}(\sW)\cong\Nm^{-1}(\sW')$, we may simply denoted them by $\Prym^{d}(\tilde{X}_a/X)$ where $d=\deg \sW=\deg\sW'$. As a Corollary of Theorem \ref{parabolic BNR non closed}, and argue as \cite[Lemma 7.8]{GWZ20m}:
\begin{proposition}\label{prop:fiber of sl and pgl}
	we denote the trace free part of $\sH_{P}^{\diamond}$ by $\sH_{P}^{0\diamond}=\sH_{P}^{\diamond}\cap \sH_P^0$. 
	\begin{enumerate}
		\item For the $\operatorname{SL_r}$ case, the Hitchin map $h_{\SL_r,P}$ over $\sH_{P}^{0\diamond}$ is isomorphic to $\Prym^{\deg\sL}(\tilde{\sX}/\sH_{P}^{0\diamond})$ which is a torsor over the relative prym variety $\Prym(\tilde{\sX}/\sH_{P}^{0\diamond})$, and similarly
		\item for the $\operatorname{PGL_r}$ case, $h_{\PGL_r,P}$ over $\sH_{P}^{0\diamond}$ is isomorphic to $\Prym^{e}(\tilde{\sX}/\sH_{P}^{0\diamond})/\Pic^0(X)[r]$ which is a torsor over the dual Abelian variety $\Prym^{\vee}(\tilde{\sX}/ \sH_{P}^{0\diamond})$.
	\end{enumerate} 
\end{proposition}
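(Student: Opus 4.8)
The plan is to derive both statements from the parabolic BNR correspondence of Theorem \ref{parabolic BNR non closed} by tracking determinants along the pushforward functor $\tilde\pi_*$, following the non-parabolic argument of \cite[Lemma 7.8]{GWZ20m}. First I would recall that over $\sH_P^{0\diamond}$ the correspondence $\sW\mapsto\tilde\pi_*\sW$ identifies the $\GL_r$-fiber $h_P^{-1}(a)$ with a component $\Pic^{\delta}(\tilde X_a)$ of the relative Jacobian. For the $\SL_r$-case one must additionally impose the determinant constraint $\det(\tilde\pi_*\sW)\cong\sL$. Using the defining relation of the norm map, namely $\det(\tilde\pi_*\sW)=\Nm(\sW)\otimes\det(\tilde\pi_*\sO_{\tilde X_a})$, this constraint is equivalent to fixing $\Nm(\sW)\cong\sL\otimes\det(\tilde\pi_*\sO_{\tilde X_a})^{-1}$, a single prescribed line bundle on $X$. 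The locus of such $\sW$ is precisely the norm fiber $\Nm^{-1}\big(\sL\otimes\det(\tilde\pi_*\sO_{\tilde X_a})^{-1}\big)$, which by the definition recalled above is $\Prym^{\deg\sL}(\tilde X_a/X)$ and is a torsor under $\ker\Nm=\Prym(\tilde X_a/X)$. Carrying this out fibrewise over $\sH_P^{0\diamond}$, and using that $\tilde{\sX}\to\sH_P^{0\diamond}$ is the relative normalization, yields the relative assertion in part (1).

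For part (2), I would invoke the identification $\check{\sM}=[\hat{\sM}/\Pic^0(X)[r]]$ from Section 2, in which $\eta\in\Pic^0(X)[r]$ acts on $\SL_r$-Higgs bundles by $\sE\mapsto\sE\otimes\eta$. Under BNR this translates into the tensoring action of $\tilde\pi^*\eta$ on $\Pic(\tilde X_a)$; since $\Nm(\tilde\pi^*\eta)=\eta^{\otimes r}=\sO_X$ for any $r$-torsion $\eta$, this action preserves $\Prym(\tilde X_a/X)$ together with its torsor. Consequently the $\PGL_r$-fiber is the quotient $\Prym^{e}(\tilde X_a/X)/\Pic^0(X)[r]$, where the superscript drops to $e\in\BZ/r\BZ$ precisely because, after quotienting by the $r$-torsion, the degree of the fixed line bundle on $X$ is only defined modulo $r$. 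This quotient is a torsor over $\Prym(\tilde X_a/X)/\Pic^0(X)[r]=\Prym^{\vee}(\tilde X_a/X)$, the final identification being the duality of Prym varieties recalled just before the statement.

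The hard part will be the relative and arithmetic bookkeeping rather than any single closed-point computation. One must verify that the determinant, norm, and quotient constructions glue into morphisms of the relative Picard and Prym schemes over all of $\sH_P^{0\diamond}$, not merely over geometric points, so that the fibrewise torsor structures assemble into genuine relative torsors; and one must check that the $\Pic^0(X)[r]$-action is free, so that the $\PGL_r$-quotient is again a torsor under the relative dual abelian scheme $\Prym^{\vee}(\tilde{\sX}/\sH_P^{0\diamond})$. The degree identification that justifies writing $\Prym^{\deg\sL}$ (via computing $\deg\det(\tilde\pi_*\sO_{\tilde X_a})$ by relative Riemann--Roch for the degree-$r$ cover) is routine but should be recorded to pin down the superscript.
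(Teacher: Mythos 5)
Your proposal matches the paper's own proof: the paper likewise reads off the $\SL_r$-fiber from the parabolic BNR correspondence as the locus $\det\tilde\pi_*\sW\cong\sL$, rewrites this via the definition of $\Nm$ as a norm fiber (hence a $\Prym$-torsor), and handles the $\PGL_r$-case by quotienting by $\Pic^0(X)[r]$, citing \cite[Lemma 7.8]{GWZ20m} for the pattern. Your version is simply a more careful write-up of the same argument (and, incidentally, gets the sign on $\det(\tilde\pi_*\sO_{\tilde X_a})^{-1}$ right where the paper's displayed formula appears to drop the inverse).
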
\label{prop:dual av fiber}
\begin{proof}
	Since an element in a generic fiber $h_{\SL_r,P}^{-1}(a)$ corresponds to line bundle $\sW$ on $\tilde{X}_a$ which satisfies:
	\[
	\det\tilde{\pi}_*\sW\cong\sL.
	\]
	Hence by the definition of $\Nm$, a generic fiber $h_{\SL_r,P}^{-1}(a)$ is naturally isomorphic, as a $\Prym(\tilde{\sX}/\sH_{P}^{0\diamond})$ torsor, to:
	\[\Nm^{-1}(\sL\otimes\det(\tilde{\pi}_*\sO_{\tilde{X}_a})).\]
	And similar for generic fibers of $h_{\PGL_r,P}$.
\end{proof}

\subsection{Arithmetic duality of the generic Hitchin fibers}\label{subsec:arithmetic duality}

The moduli space of the parabolic $\PGL_r$-Higgs bundles is non-compact and singular, thus to put in into the toppological mirror symmetry picture, the  ``Hodge numbers” must be interpreted in a generalized sense: as stringy mixed Hodge numbers twisted by a gerbe (see \cite[Section 4]{HT03}). In this subsection, we introduce certain naturally defined $\mu_r$-gerbes. And  as in \cite[Theorem 7.18]{GWZ20m}, the arithmetic duality of Hitchin fibers holds which is a refined version of Proposition \ref{prop:fiber of sl and pgl}.



Following the concepts in \cite{HT03} and in particular \cite[Subsection 7.4]{GWZ20m}, we define the $\mu_r$ gerbe $\alpha^{\sL}_{\SL_r,P}$ as the lifting gerbe of the (pullback of the) universal parabolic $\PGL_r$ Higgs bundle over the moduli space $\sM_{\SL_r,P}^{\sL}$. 
Similarly, using the lifting gerbe of unverisal parabolic $\PGL_r$ Higgs bundles over the moduli stack $\sM_{\PGL_r,P}^{e}$, we define a $\mu_r$-gerbe $\alpha_{\PGL_r,P}^{e}$ on $\sM_{\PGL_r,P}^{e}$. In fact, $\alpha_{\PGL_r,P}^{e}$ can also be defined as a descent of $\alpha_{\SL_r,P}^{\sL'}$ on $\sM_{\SL_r,P}^{\sL'}$ for a line bundle $\sL'$ on $X$ of degree $e$. See \cite[Subsection 7.4]{GWZ20m}.

Since we identify generic fibers of $h_{\SL_r,P}$ and $h_{\PGL_r,P}$ with torsors over dual Prym varieties, hence over some finite extension of $k$, $\alpha_{\SL_r,P}^{\sL},\alpha_{\PGL_r,P}^{e}$ split. Such kind of gerbes are defined as arithmetic gerbes in \cite[Subsection 6.1]{GWZ20m}. And the relative splitting of $\alpha_{\SL_r,P}^{\sL}$ with respect to $h_{\SL_r,P}$ over $\sH_{P}^{0\diamond}$ defines a $\Prym^{\vee}(\tilde{\sX}/X)[r]$-torsor denoted by $\Split(\sM_{\SL_r,P}^{\sL},\alpha_{\SL_r,P}^{\sL})$. And naturally, the $\Prym^{\vee}(\tilde{\sX}/X)[r]$-torsor $\Split(\sM_{\SL_r,P}^{\sL},\alpha_{\SL_r,P}^{\sL})$ induces a $\Prym^{\vee}(\tilde{\sX}/X)$-torsor. We may denote it by $\Split'(\sM_{\SL_r,P}^{\sL},\alpha_{\SL_r,P}^{\sL})$. And same arguments for $\PGL_r$-side. Now we close this section by the following theorem:
\begin{theorem}\label{thm:syz mirror symmetry}
	we have the following identification:
	\begin{align*}
		\Split'(\sM^{\sL,\diamond}_{\SL_r,P}/\sH_{P}^{0\diamond}, e\alpha_{\SL_r,P}^{\sL})\cong \sM^{e,\diamond}_{\PGL_r,P}/\sH_{P}^{0\diamond}\\
		\Split'(\sM^{e,\diamond}_{\PGL_r,P}/\sH_{P}^{0\diamond}, d\alpha_{\PGL_r,P}^{e})\cong \sM^{\sL,\diamond}_{\SL_r,P}/\sH_{P}^{0\diamond}
	\end{align*}
	where $\sM^{\sL,\diamond}_{\SL_r,P}$ (resp. $\sM^{e,\diamond}_{\PGL_r,P}$) is the restriction of $\sM^{\sL}_{\SL_r,P}$ (resp. $\sM^{e}_{\PGL_r,P}$) to $\sH_{P}^{0\diamond}$.
\end{theorem}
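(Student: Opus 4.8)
The plan is to deduce this relative SYZ mirror symmetry from the fiberwise description already obtained, combined with the abstract gerbe--splitting formalism of Groechenig--Wyss--Ziegler. The whole statement lives over the good locus $\sH_P^{0\diamond}$, so the codimension-one defect of $\sH_P^{0\diamond}$ inside $\sH_P$ (which will matter only later, for the $p$-adic integration) plays no role here; everything reduces to a statement about dual abelian schemes and arithmetic gerbes on them. First I would record the structure maps: by Proposition \ref{prop:fiber of sl and pgl}, over $\sH_P^{0\diamond}$ the space $\sM^{\sL,\diamond}_{\SL_r,P}$ is a torsor under the relative Prym scheme $\Prym(\tilde{\sX}/\sH_P^{0\diamond})$, while $\sM^{e,\diamond}_{\PGL_r,P}$ is a torsor under the dual abelian scheme $\Prym^\vee(\tilde{\sX}/\sH_P^{0\diamond})=\Prym(\tilde{\sX}/\sH_P^{0\diamond})/\Pic^0(X)[r]$. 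Since the two are dual abelian schemes, the Weil pairing gives a perfect pairing $\Prym[r]\times\Prym^\vee[r]\to\mu_r$ identifying $\Prym^\vee[r]$ with the Cartier dual of $\Prym[r]$; this is exactly the input the machinery of \cite[Section 6.1 and Subsection 7.4]{GWZ20m} requires.

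Next I would invoke the splitting construction. The gerbe $\alpha^{\sL}_{\SL_r,P}$ is the lifting gerbe of the universal parabolic $\PGL_r$-Higgs bundle, hence an arithmetic $\mu_r$-gerbe: it splits after a finite extension, as recorded in Subsection \ref{subsec:arithmetic duality}. For such a gerbe on a torsor $T$ under an abelian scheme $A$, the sheaf of relative splittings is a torsor under $A^\vee[r]$, and the associated $A^\vee$-torsor $\Split'$ is the object to be identified; taking the $e$-th multiple $e\alpha^{\sL}_{\SL_r,P}$ in $H^2(-,\mu_r)$ before splitting selects the degree-$e$ component. The key computation is then to match the class of $\Split'(\sM^{\sL,\diamond}_{\SL_r,P}/\sH_P^{0\diamond},\,e\alpha^{\sL}_{\SL_r,P})$ in $H^1(\sH_P^{0\diamond},\Prym^\vee)$ with that of $\Prym^e(\tilde{\sX}/\sH_P^{0\diamond})/\Pic^0(X)[r]=\sM^{e,\diamond}_{\PGL_r,P}$, which by Proposition \ref{prop:fiber of sl and pgl} is the $\PGL_r$ Hitchin fiber. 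This follows from the standard compatibility between the class of the lifting gerbe and the Poincar\'e bundle under the Prym autoduality, exactly as in \cite[Theorem 7.18]{GWZ20m}.

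The symmetric identification, splitting $d\,\alpha^{e}_{\PGL_r,P}$ (with $d=\deg\sL$) to recover $\sM^{\sL,\diamond}_{\SL_r,P}$, is proved by the same argument with the roles of $\Prym$ and $\Prym^\vee$ interchanged, so that once either direction is established the other is formal. The hard part will be checking that the parabolic modifications do not disturb this computation. Concretely, I must verify that the parabolic BNR correspondence of Theorem \ref{parabolic BNR non closed} --- which carries the norm map, the direct-image Young-diagram filtration of Proposition \ref{propdef:young filtration}, and the determinant constraint $\det\tilde{\pi}_*\sW\cong\sL$ --- is compatible with the autoduality $\Prym^\vee=\Prym/\Pic^0(X)[r]$ and with the Weil pairing used to split the gerbe. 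In particular one must confirm that the lifting gerbe's class pairs with the $\Prym^\vee$-torsor component in the way dictated by the duality, so that raising to the $e$-th (resp.\ $d$-th) power lands in the correct connected component rather than differing by an element of $\Pic^0(X)[r]$. Once this compatibility is in place, the two displayed isomorphisms follow formally from the abstract duality of \cite[Section 6.1]{GWZ20m}.
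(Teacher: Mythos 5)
Your proposal follows essentially the same route the paper takes: the paper offers no written proof of this theorem, asserting only that the identification holds ``as in [GWZ20m, Theorem 7.18]'' once the generic fibers are identified with torsors under the dual Prym varieties $\Prym(\tilde{\sX}/\sH_P^{0\diamond})$ and $\Prym^{\vee}(\tilde{\sX}/\sH_P^{0\diamond})$ (Proposition \ref{prop:fiber of sl and pgl}), and your argument is precisely a fleshed-out version of that reduction via the lifting-gerbe splitting formalism. The one point worth adding is the paper's own remark immediately after the theorem that in the parabolic setting $\alpha^{\sL}_{\SL_r,P}$ is in fact a trivial gerbe (a universal Higgs bundle exists because the weights are generic), which substantially simplifies the compatibility check you flag as ``the hard part.''
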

\begin{remark}
	In our parabolic setting, the universal parabolic $\PGL_r$ Higgs bundle over the moduli space $\sM_{\SL_r,P}^{\sL}$ always comes a universal Higgs bundle. Hence $\alpha_{\SL_r,P}^{\sL}$ is a trivial gerbe. 
\end{remark}

\section{Symplectic Structures of $\sM_{\SL_r,P}^{\sL}$}

To prove toplogical mirror symmetry for non-parabolic $\SL_r/\PGL_r$ Higgs moduli spaces over $\BC$ via the p-adic integration formalism in \cite{GWZ20m}, the first step is to descend these varieties from $\BC$ to a finitely generated $\BZ$-algebra contained in $\BC$. Let $R$ be such an algebra. By shrinking $\Spec(R)$, we will assume $R$ is regular. In this section, we consider moduli spaces of parabolic $\SL_r$ Higgs bundles defined over $R$.  For simplicity, we fix our parabolic data $(X,D,P,\alpha)$ and put $\hat{\sM}:=\sM_{\SL_r,P}^{\sL}$,
$\hat h=h_{\SL_r,P}:\hat {\sM}\to \sH_P^0$. we show that there is a symplectic structure on the moduli spaces $\hat{\sM}$. 

Let $\hat{\sN}$ be the moduli space of stable parabolic $\mathcal{L}$-twisted $\SL_r$ bundle on $(X,D,P,\alpha)$.
Then the cotangent bundle $T^\vee_{\hat{\sN}}$ of $\hat{\sN}$ is an open subset $\hat{\sM}$. The idea is to first construct a nowhere vanishing 2-form on $\hat{\sM}$ via deformation theory. Then we show that it restricts to the canonical symplectic form on $T^\vee_{\hat{\sN}}$ which means that it is also a closed 2-form, hence a symplectic form on $\hat{\sM}$.

Let $\bsf V$ be the universal parabolic $\SL_r$ vector bundle \footnote{The existence of $\bsf V$ is due to the generic choice of weight $\alpha$, see \cite[Proposition 3.2]{BY99}} over $X\times\hat{\sN}$ and $(\bsf U,\uptheta_\bsf U)$ be the universal parabolic $\SL_r$ Higgs bundle on $X\times \hat{\sM}$. 
\begin{equation}
	\begin{tikzcd}
		&\bsf V\ar[d]\\
		&X\times \hat{\sN}\ar[ld]\ar[rd,"p_{\hat{\sN}}"]\\
		X&&\hat{\sN}
	\end{tikzcd},
	\begin{tikzcd}
		&(\bsf U,\uptheta_{\bsf U})\ar[d]\\
		&X\times \hat{\sM}\ar[ld,"p_X"]\ar[rd,"p_{\hat{\sM}}"]\\
		X&&\hat{\sM}
	\end{tikzcd}
\end{equation}
Over the open subset $X\times T^\vee_{\hat{\sN}}$, one has that $\bsf U|_{\tx{cotangent of }\hat{N}}\cong \pi_{X\times\hat{\sN}}^*\bsf V$ where $\pi_{X\times\hat{\sN}}: X\times T^{\vee}_{\hat{\sN}}\rightarrow X\times\hat{\sN}$. 

By deformation theory, the tangent complex of $\hat{\sN}$ is given by $Rp_{{\hat{\sN}}*}(\sP ar\sE nd^0(\bsf V)[1])$ which is a coherent sheaf and concentrated at degree zero. Here $\sE nd^0$ means the trace free endomorphisms. Moreover, the
tangent complex of $\hat{\sM}$ is given by $$Rp_{{\hat{\sM}}*}(\sP ar\sE nd^0(\bsf U)\xrightarrow{\tx{ad}_{\uptheta_\bsf{U}}} \sS\sP ar\sE nd^0(\bsf U)\otimes p_X^*\omega_X(D)[1])$$ which is also a coherent sheaf and concentrated at degree zero. Let us denote the complex $\sP ar\sE nd^0(\bsf U)\xrightarrow{\tx{ad}_{\uptheta_\bsf{U}}} \sS\sP ar\sE nd^0(\bsf U)\otimes p_X^*\omega_X(D)$ by $\tx{ad}^\pt_{\uptheta_\bsf U}$. Then we see that the tangent sheaf $\sT_{\hat{\sN}}$ is given by $R^1p_{{\hat{\sN}}*}(\sP ar\sE nd^0(\bsf V))$ and the tangent sheaf $\sT_{\hat{\sM}}$ is given by $R^1p_{{\hat{\sM}}*}(\tx{ad}^\pt_{\uptheta_\bsf U})$. By Serre duality, the cotangent sheaf of $\hat{\sN}$ can be given by $p_{{\hat{\sN}}*}(\sS\sP ar\sE nd^0(\bsf V)\otimes p^*\omega_X(D))$, and by Grothendieck duality, the cotangent sheaf of $\hat{\sM}$ is given by $R^{1}p_{{\hat{\sM}}*}({\tx{ad}^\pt_{\uptheta_\bsf U}}^\vee\otimes p^*\omega_X[-1])$. 

Following \cite[page 140, (3.1)]{Yo95}, for parabolic bundle $\sE$, we have 
\begin{equation}\label{(3.4)}  \sP ar \sE nd(\sE)^{\vee}\cong \sS \sP ar\sE nd(\sE)\otimes_{\sO_X}\sO_X(D).
\end{equation}
In particular, for the universal Higgs bundle, one has the isomorphism $$\phi: \sS\sP ar\sE nd^0(\bsf U)\otimes p_X^*\omega_X(D) \stackrel{\cong}{\to} {\sP ar\sE nd^0(\bsf U)}^\vee\otimes p_X^*\omega_X$$ and it fits into the following commutative diagram:
\begin{footnotesize}
	\[\tilde B:\begin{tikzcd}
		\sP ar\sE nd^0(\bsf U)\arrow[rr,"{\tx{ad}_{\uptheta_\bsf{U}}}"]\arrow[d,"\cong","{-(\phi^\vee)^{-1}}"']&	
		& \sS\sP ar\sE nd^0(\bsf U)\otimes p_X^*\omega_X(D) \arrow[d,"\cong","\phi"'] \\
		(\sS\sP ar\sE nd^0(\bsf U)\otimes p_X^*\omega_X(D))^\vee\otimes p_X^*\omega_X\arrow[rr,"{-\tx{ad}_{\uptheta_\bsf{U}}\otimes\text{id}_{\omega_X}}"]&	
		& {\sP ar\sE nd^0(\bsf U)}^\vee\otimes p_X^*\omega_X
	\end{tikzcd}\]
\end{footnotesize}
where $\tilde B$ means the isomorphism between complexes $\tx{ad}^\pt_{\uptheta_\bsf U}\cong {\tx{ad}^\pt_{\uptheta_\bsf U}}^\vee\otimes p^*\omega_X[-1]$. After taking direct image $R^1p_{{\hat{\sM}}*}$, we easily get the isomorphism $R^1p_{{\hat{\sM}}*}\tilde B: \sT_{\hat{\sM}}\cong \Omega^1_{\hat{\sM}}$. 

In fact, $R^1p_{{\hat{\sM}}*}\tilde B$ induces a symplectic form on $\hat{\sM}$ and restrict to the canonical symplectic form on $T^\vee_{\hat{\sN}}\subset \hat{\sM}$. The following lemma shows that $R^1p_{\hat{\sM},*}\tilde{B}$ is anti-symmetric.

\blemma Let $V$ be a vector space over $k$, such that $V\cong V_0\oplus V_1$ with subspaces $V_0,V_1$ of the same dimension $n$. If we have an isomorphism $\phi$ and $B_V$ such that they fit into the following diagram:
\[\begin{tikzcd}
	0\arrow[r]&V_0\arrow[r]\arrow[d,"\cong","{-(\phi^\vee)^{-1}}"']&	V \arrow[r] \arrow[d,"\cong","B_V"'] 
	& V_1 \arrow[d,"\cong","\phi"']\arrow[r]&0 \\
	0\arrow[r] &V_1^\vee\arrow[r]&	V^\vee \arrow[r]
	& V_0^\vee\arrow[r]&0
\end{tikzcd}\]
then $B_V$ induces a non-degenerate anti-symmetric bilinear form on $V$. 
\elemma

Let $Z$ be a smooth variety, the tangent sheaf of the cotangent bundle $T^\vee_Z$, has a decomposition $\sT_{T^\vee_Z}\cong \pi_Z^*\sT_Z\oplus \pi_Z^*\Omega^1_Z$. Then we have the following isomorphisms
\[\begin{tikzcd}
	0\arrow[r]&\pi_Z^*\Omega^1_Z\arrow[r]\arrow[d,"\cong","{-(\phi^\vee)^{-1}}"']&	\sT_{T^\vee_Z} \arrow[r] \arrow[d,"\cong","B_Z"'] 
	& \pi_Z^*\sT_Z \arrow[d,"\cong","\phi"']\arrow[r]&0 \\
	0\arrow[r] &(\pi_Z^*\sT_Z)^\vee\arrow[r]&	\sT_{T^\vee_Z}^\vee \arrow[r]
	& (\pi_Z^*\Omega^1_Z)^\vee\arrow[r]&0
\end{tikzcd}\]
where $\phi$ is induced by the isomorphism $\sT_Z\cong_{\sO_Z} (\Omega_Z^1)^\vee$ and $B_Z$ is given by $\phi$ and $-(\phi^\vee)^{-1}$. If we check by local coordinate, we will see that the symplectic form $B_V$ is just the canonical symplectic form on the cotangent bundle $T^\vee_Z$.

We now return to our moduli spaces case and show that the 2-form defined by $R^1p_{\hat{\sM},*}\tilde{B}$ coincides with the canonical symplectic form on $T^{\vee}_{\hat{\sN}}$, hence it is a closed symplectic form.
\bproposition\label{symplecticSL} $R^1p_{{\hat{\sM}}*}\tilde B$ induces a symplectic form on $\hat{\sM}$ and restrict to the canonical symplectic form on $T^\vee_{\hat{\sN}}\subset \hat{\sM}$. And this implies that the moduli space of stable parabolic $\SL_r$ Higgs bundles $\hat{\sM}$ has trivial canonical bundle.
\eproposition
\begin{proof}
	Let us restrict $R^1p_{\hat{\sM},*}\tilde{B}$ to the open subvariety $T^\vee_{\hat{\sN}}$. 
	
	Since one has $\sP ar\sE nd^0(\bsf U)\cong \pi_{X\times \hat{\sN}}^*\sP ar\sE nd^0(\bsf V)$ and $\pi_{\hat{\sN}}^*\sT_{\hat{\sN}}\cong R^1p_{{\hat{\sM}}*}(\sP ar\sE nd^0(\bsf U))$, then the direct image $R^1p_{{\hat{\sM}}*}$ of the truncation
	\[\sS\sP ar\sE nd^0(\bsf U)\otimes p_X^*\omega_X(D)[-1]\to \tx{ad}^\pt_{\uptheta_\bsf U}\to\sP ar\sE nd^0(\bsf U)\xrightarrow{+1} \]
	induces the short exact sequence
	\begin{footnotesize}
		\[\begin{tikzcd}
			0\arrow[r]&p_{{\hat{\sM}}*}(\sS\sP ar\sE nd^0(\bsf U)\otimes p_X^*\omega_X(D))\arrow[r]\arrow[d,"\cong"]&	R^1p_{{\hat{\sM}}*}(\tx{ad}^\pt_{\uptheta_\bsf U}) \arrow[r] \arrow[d,"\cong"] 
			& R^1p_{{\hat{\sM}}*}(\sP ar\sE nd^0(\bsf U)) \arrow[d,"\cong"]\arrow[r]&0 \\
			0\arrow[r]&\pi_{\hat{\sN}}^*\Omega^1_{\hat{\sN}}\arrow[r]&	\sT_{T^\vee_{\hat{\sN}}} \arrow[r] 
			& \pi_{\hat{\sN}}^*\sT_{\hat{\sN}}\arrow[r]&0
		\end{tikzcd}\]
	\end{footnotesize}
	which is isomorphic to the first exact sequence of the tangent sheaf of $T^\vee_{\hat{\sN}}/\hat{\sN}$.
	
	Similarly,  the direct image $R^1p_{{\hat{\sM}}*}$ of the truncation triangle of  ${\tx{ad}^\pt_{\uptheta_\bsf U}}^\vee\otimes p^*\omega_X[-1]$ is isomorphic to the dual exact sequence $0\to (\pi_{\hat{\sN}}^*\sT_{\hat{\sN}})^\vee\to (\sT_{T^\vee_{\hat{\sN}}})^\vee \to (\pi_{\hat{\sN}}^*\Omega^1_{\hat{\sN}})^\vee\to 0$.
	
	Thus $R^1p_{{\hat{\sM}}*}\tilde B: \sT_{\hat{\sM}}\cong \Omega^1_{\hat{\sM}}$ fits into the exact sequence
	\begin{tiny}
		\[\begin{tikzcd}
			0\arrow[r]& p_{{\hat{\sM}}*}(\sS\sP ar\sE nd^0(\bsf U)\otimes p_X^*\omega_X(D))\arrow[r]\arrow[d,"\cong","{R^1p_{{\hat{\sM}}*}\phi}"']&	R^1p_{{\hat{\sM}}*}(\tx{ad}^\pt_{\uptheta_\bsf U}) \arrow[r] \arrow[d,"\cong","{R^1p_{{\hat{\sM}}*}\tilde B}"'] 
			& R^1p_{{\hat{\sM}}*}(\sP ar\sE nd^0(\bsf U)) \arrow[d,"\cong","{-{R^1p_{{\hat{\sM}}*}{\phi^{\vee}}^{-1}}}"']\arrow[r]&0 \\
			0\arrow[r]& p_{\hat{\sM}_*}{\sP ar\sE nd^0(\bsf U)}^\vee\otimes p^*\omega_X\arrow[r]&	R^1p_{{\hat{\sM}}*}({\tx{ad}^\pt_{\uptheta_\bsf U}}^\vee\otimes p^*\omega_X[-1]) \arrow[r] 
			& R^1p_{\hat{\sM}*}(\sS\sP ar\sE nd^0(\bsf U)\otimes p_X^*\omega_X(D))^\vee\otimes p_X^*\omega_X\arrow[r]&0
		\end{tikzcd}\]
	\end{tiny}
\end{proof}
	
	
	
	As a result, we obtain the following which will be used in the calculation of p-adic integration.
	\begin{corollary}  The $\SL_r$ parabolic Hitchin map $\hat h:\hat {\sM}\to \sH_P^0$ is an l.c.i. map. $\omega_{\hat{\sM}}\cong \omega_{\hat{h}}\otimes \hat{h}^*\omega_{\sH^0_P}$ and the relative dualizing sheaf $\omega_{\hat{h}}$ is trivial. In particular, when restricted to generic fibers, 
	\end{corollary}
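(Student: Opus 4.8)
The plan is to derive the whole statement from two facts already in hand: the smoothness of source and target, and the triviality of $\omega_{\hat\sM}$ furnished by Proposition~\ref{symplecticSL}. Recall that $\hat\sM=\sM^{\sL}_{\SL_r,P}$ is smooth (for generic $\alpha$) and that the base $\sH^0_P$ is an affine space, hence smooth with trivial canonical bundle $\omega_{\sH^0_P}\cong\sO_{\sH^0_P}$.

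First I would establish the l.c.i. claim by the standard graph factorization: write $\hat h$ as $\hat\sM\xrightarrow{\Gamma_{\hat h}}\hat\sM\times\sH^0_P\xrightarrow{\mathrm{pr}_2}\sH^0_P$. The graph $\Gamma_{\hat h}$ is a section of the smooth projection onto $\hat\sM$, hence a regular immersion, while $\mathrm{pr}_2$ is smooth; a regular immersion followed by a smooth morphism is l.c.i. The payoff of the l.c.i. property is that the relative dualizing complex $\omega_{\hat h}$ is a shift of an honest line bundle, so that the transitivity triangle for dualizing complexes collapses to the adjunction isomorphism $\omega_{\hat\sM}\cong\omega_{\hat h}\otimes\hat h^*\omega_{\sH^0_P}$.

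Next I would feed in the two triviality inputs. Proposition~\ref{symplecticSL} produces a nowhere-vanishing top form on $\hat\sM$ from the symplectic $2$-form, giving $\omega_{\hat\sM}\cong\sO_{\hat\sM}$, while $\omega_{\sH^0_P}\cong\sO_{\sH^0_P}$ since the base is affine space. Rearranging the adjunction isomorphism as $\omega_{\hat h}\cong\omega_{\hat\sM}\otimes\hat h^*\omega_{\sH^0_P}^{\vee}$ then yields $\omega_{\hat h}\cong\sO_{\hat\sM}$, i.e. the relative dualizing sheaf is trivial. Restricting this global trivialization to a generic (smooth) fiber $\hat h^{-1}(a)$ identifies $\omega_{\hat h}|_{\hat h^{-1}(a)}$ with the canonical bundle of the fiber, which is therefore trivial; this is exactly what one expects, since the fibers are torsors over the abelian varieties $\Prym(\tilde\sX/\sH^{0\diamond}_P)$ of Proposition~\ref{prop:fiber of sl and pgl}.

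The one point that needs care, rather than a genuine obstacle, is that $\hat h$ is \emph{not} smooth: its fibers degenerate over the complement of the good locus $\sH^{0\diamond}_P$, so a naive fiberwise canonical bundle is unavailable there. It is the l.c.i. property that saves the argument, making $\omega_{\hat h}$ an invertible sheaf globally and legitimizing the adjunction formula over all of $\sH^0_P$; I would therefore phrase the middle step through the Grothendieck-duality formalism for l.c.i. morphisms rather than the smooth-case canonical-bundle difference, which is only valid on the smooth locus. The resulting nowhere-vanishing relative volume form is precisely the input required to run the $p$-adic integration of \cite{GWZ20m}.
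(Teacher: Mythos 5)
Your proposal is correct and follows essentially the same route as the paper: the paper likewise deduces the l.c.i.\ property from the smoothness of $\hat{\sM}$ and $\sH_P^0$ (citing the Stacks Project where you unpack the graph factorization), invokes the adjunction $\omega_{\hat{\sM}}\cong \omega_{\hat{h}}\otimes \hat{h}^*\omega_{\sH^0_P}$ for l.c.i.\ morphisms, and concludes triviality of $\omega_{\hat{h}}$ from Proposition~\ref{symplecticSL} and the triviality of the canonical bundle of the affine base.
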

	\begin{proof}
		Since both $\hat {\sM}$ and $\sH_P^0$ are smooth, $\hat{h}$ is a l.c.i. by \cite[Lemma 37.59.11, tag 0E9K]{stacks-project}. Hence its realtive dualizing complex is concentrated in degree zero by \cite[Lemma 48.29.2, tag 0E9Z]{stacks-project}. We denote it by $\omega_{\hat{h}}$. And by \cite[Lemma 48.28.10, tag 0E30]{stacks-project}, we have $\omega_{\hat{\sM}}\cong \omega_{\hat{h}}\otimes \hat{h}^*\omega_{\sH^0_P}$. Hence the relative dualizing sheaf $\omega_{\hat{h}}$ is trivial, in particular, it restricts to the translation invariant top forms on generic fibers.
	\end{proof}
	\begin{remark} When the base field is $\mathbb C$, the symplectic structure of the parabolic Hitchin system is showed in \cite{BR94} and \cite{Botta95}.
	\end{remark}

	\section{Topological Mirror Symmetry}
	In this section, we first present some results in the seminal paper \cite{GWZ20m} without going into details. Then we prove the topological mirror symmetry for the parabolic $\operatorname{SL_r}$/$\operatorname{PGL_r}$ Hitchin systems.
		%

	\subsection{Twisted Stringy $E$-Polynomials and Point Counting}
	Let $\Gamma$ be a finite groups acting generically fixed-point freely on a smooth quasi-projective variety { $V$} of dimension $n$ over a field $k$. We assume $\#\Gamma$ is invertible in $k$. For each element $r\in\Gamma$, we put $C(\gamma)$ its centralizer in $\Gamma$. We denote by $[V/\Gamma]$ the quotient stack over $k$. In the following, we fix a system of primitive root of units satisfying that:
	\[
	\xi_{mn}^{n}=\xi_{m}
	\]
	where $\xi_{\ell}$ is the $\ell$-th primitive root of unit we choose.
	\begin{definition}\label{def:stringy polynomials}
		When $k=\BC$,
		\begin{enumerate}
			\item The stringy $E$ polynomial of $[V/\Gamma]$ is defined as:
			\[
			E_{\text{st}}([V/\Gamma];u,v)=\sum_{\gamma\in \Gamma/{\text{conj}}}(\sum_{Z\in\pi_{0}([V^{\gamma}/C(\gamma)])}E(Z;u,v)(uv)^{F(\gamma,Z)})
			\]
			where $\Gamma/{\text{conj}}$ is the set of conjugacy classes of $\Gamma$, the second summation is over connected components of $[V^{\gamma}/C(\gamma)]$. Here $F(\gamma,Z)$ is the Fermionic shift defined as follows, let $x\in X^{r}$ with image in $Z$, then $\gamma$ acts on $T_xV$ with eigenvalue $\{\xi^{c_i}\}_{i=1,\ldots,n}$ where $0\leq c_i<\#\langle\gamma\rangle$, $\#\langle\gamma\rangle$ is the order of $\gamma$,  for $i=1,\ldots,n$. Then 
			\[F(\gamma,x)=\sum \frac{c_i}{\#\langle\gamma\rangle}\]
			It is not difficult to see that $F(\gamma,x)$ is locally constant for $x\in V^{\gamma}$, hence we put it as $F(\gamma,Z)$.
			
			We write $Z=[W/C(\gamma)]$, then:
			\[
			E(Z;u,v)=\sum_{p,q,k}(-1)^{k}\dim (\gr^{W}_{p,q}H_{c}^{k}(W)^{C(\gamma)})u^pv^q.
			\] 
			\item Let $\alpha$ be a $\mu_{r}$-gerbe on $[V/\Gamma]$, we may treat it as an element in $H^{2}(V,\mu_r)^{\Gamma}$. By the transgression, $\alpha$ defines a $\mu_r$ bundle  $\sL_{\gamma}^{\alpha}$ on $[V^\gamma/C(\gamma)]$. Then we define the stringy $E$ polynomial of $[V/\Gamma]$ twisted by $\alpha$ as follows:
			\[
			E^{\alpha}_{\text{st}}([V/\Gamma];u,v)=\sum_{\gamma\in \Gamma/{\text{conj}}}(\sum_{Z\in\pi_{0}([V^{\gamma}/C(\gamma)])}E(Z,\sL_{\gamma}^{\alpha};u,v)(uv)^{F(\gamma,Z)})
			\]
			where
			\[
			E(Z,\sL_{\gamma}^{\alpha};u,v)=E^{\chi}(\sL_{\gamma}^{\alpha};u,v)
			\]
			$\chi:\mu_{r}\rightarrow\BC^{\times}$ is the standard character and $E^{\chi}$ denotes the part of the $E$-polynomials corresponding to $\chi$-isotypic component of the cohomology of the total space $H_c^*(\sL_{\gamma}^{\alpha})$.
		\end{enumerate}
		
	\end{definition}
	Analogously, we define twisted stringy point counting over a finite field $\BF_{q}$.
	\begin{definition}\label{def:stringy counting}
		When $k=\BF_q$,
		\begin{enumerate}
			\item The stringy point counting of $[V/\Gamma]$ over $\BF_q$ is defined as follows:
			\[
			\#_{\text{st}}[V/\Gamma](\BF_q)=\sum_{\gamma\in \Gamma/\text{conj}}(\sum_{Z\in\pi_{0}([V^{\gamma}/C(\gamma)])}q^{F(\gamma,Z)}\#Z(\BF_q))
			\]
			where 
			\[\#Z(\BF_q)=\sum_{z\in Z(k)_{\isom}}\frac{1}{\Aut(z)}\]
			\item Let $\alpha$ be a $\mu_r$ gerbe on $[V/\Gamma]$ and $\sL^{\alpha}_{\gamma}$ the induced $C(\gamma)$ equivariant line bundle on $X^{r}$. Then the $\alpha$-twisted stringy point counting is defined as:
			\[
			\#^{\alpha}_{\text{st}}[V/\Gamma](\BF_q)=\sum_{\gamma\in \Gamma/\text{conj}}(\sum_{Z\in\pi_{0}([X^{\gamma}/C(\gamma)])}q^{F(\gamma,Z)}\#^{\sL_{\gamma}^{\alpha}}Z(\BF_q))
			\]
			where,
			\[
			\#^{\sL_{\gamma}^{\alpha}}Z(\BF_q))=\sum_{z\in Z(k)_{\isom}}\frac{\Tr(\Fr_{z},\sL_{\gamma,z}^{\alpha})}{\Aut(z)}
			\]
		\end{enumerate}
	\end{definition}
	Via p-adic Hodge theory and Chebatarov density theorem for Galois representation of number fields, 
	\begin{theorem}\cite{GWZ20m}
		Let $R\subset \BC$ be the finitely generated $\BZ$-algebra, (containing sufficiently many roots of unit). We fix a field isomorphism $\bar{\BQ}_{\ell}\cong \BC$. Let $V_i$ be smooth over $\Spec R$ and $\Gamma_i$ be finite abelian groups acting on $V_i$, $i=1,2$. Let $\alpha_i$ be two $\mu_r$ gerbes on $[V_i/\Gamma_i]$. If for all ring homomorphism $R\rightarrow\BF_q$, we have:
		\[
		\#^{\alpha_1}_{\text{st}}([V_1/\Gamma_1]\times_{\Spec R}\Spec \BF_q)(\BF_q)= \#^{\alpha_2}_{\text{st}}([V_2/\Gamma_2]\times_{\Spec R} \Spec \BF_q)(\BF_q)
		\]
		then
		\[
		E^{\alpha_1}_{\text{st}}([V_1/\Gamma_1]\times_{\Spec R}\Spec \BC;x,y)=E^{\alpha_2}_{\text{st}}([V_2/\Gamma_2]\times_{\Spec R}\Spec \BC;x,y)
		\]
	\end{theorem}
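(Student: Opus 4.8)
The plan is to realize both twisted stringy invariants as two different realizations of a single virtual object, and then to show that agreement of the point-counting (Frobenius) realization forces agreement of the Hodge realization, using Chebotarov density together with $p$-adic Hodge theory. Concretely, for $i=1,2$ I would set $M_i := \bigoplus_{\gamma}\bigoplus_{Z} H^{*}_{c}(Z,\sL^{\alpha_i}_{\gamma})^{\chi}$, the sum over the conjugacy classes and components $Z$ appearing in Definitions \ref{def:stringy polynomials} and \ref{def:stringy counting}, each piece Tate-twisted by the Fermionic shift $F(\gamma,Z)$. Viewed as a virtual $\bar{\BQ}_\ell$-Galois representation of a number field (equivalently, a virtual lisse sheaf on an open of $\Spec R$), $M_i$ has the property that, by Grothendieck--Lefschetz, $\#^{\alpha_i}_{\text{st}}(\BF_q)=\sum_k(-1)^k\Tr(\Frob_q, M_i^k)$, whereas by Deligne's mixed Hodge theory $E^{\alpha_i}_{\text{st}}(u,v)=\sum_{s,t}\big(\sum_k(-1)^k\dim\gr_F^s\gr^W_{s+t}(M_i^k)_{\BC}\big)u^sv^t$. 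The fractional Tate twists produced by $F(\gamma,Z)$ are legitimate because $R$ contains enough roots of unity, and they shift both realizations compatibly, so they never interfere with the comparison.

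First I would pass from equal point counts to equal Galois representations. The hypothesis gives $\Tr(\Frob_q,M_1)=\Tr(\Frob_q,M_2)$ for every homomorphism $R\to\BF_q$, i.e.\ at every closed point of $\Spec R$. By Chebotarov density for the arithmetic fundamental group, together with Brauer--Nesbitt, agreement of Frobenius traces on the dense set of all closed points forces the semisimplifications to agree; hence $[M_1]=[M_2]$ in the Grothendieck group of $\ell$-adic Galois representations. In particular the two virtual representations have the same Frobenius eigenvalues with the same multiplicities, hence the same Weil weights $j$ (recovered from $|\lambda|=q^{j/2}$) and the same Hodge--Tate weights $s$ (recovered from the $p$-adic realization at a place of good reduction).

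The crux is then to recover the Hodge bigrading from these two pieces of data, and here lies the entire subtlety: point counting alone sees only the total weight $j=s+t$, so one must separate $s$ from $t$. The separation is supplied by $p$-adic Hodge theory. Since the $V_i$ are smooth, the twisted compactly supported cohomology is de Rham, and the Hodge--Tate comparison (under the fixed isomorphism $\bar{\BQ}_\ell\cong\BC$) identifies the multiplicity of Hodge--Tate weight $s$ on the pure piece $\gr^W_j$ with $\dim\gr_F^s\gr^W_j$ of the corresponding complex mixed Hodge structure. Thus each Frobenius eigenvalue $\lambda$ of $M_i$ simultaneously carries its weight $j$ and its Hodge--Tate weight $s$, and the assignment $\lambda\mapsto(s,\,j-s)$ realizes $E^{\alpha_i}_{\text{st}}$ as the image of $[M_i]$ under a well-defined additive map from the Grothendieck group of Galois representations to $\BZ[u,v]$. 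Since $[M_1]=[M_2]$, the two twisted stringy $E$-polynomials coincide.

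I expect the main obstacle to be this last step: verifying the strict compatibility of the three structures (Frobenius weights, Hodge--Tate weights, and the complex mixed Hodge structure) on the \emph{open, stacky, twisted} cohomology groups. One must ensure that $H^*_c$ of the non-proper quotients $[V^\gamma/C(\gamma)]$ with coefficients in the transgressed $\mu_r$-local systems $\sL^{\alpha_i}_\gamma$, after the fractional Tate twists, is still de Rham and that the $p$-adic comparison is strictly compatible with the weight filtration, so that Hodge--Tate weight and Frobenius weight genuinely refine to the $(s,t)$-bigrading. Deligne's theory of weights for open varieties and the de Rham comparison theorem handle the geometric input, while the arithmetic-gerbe formalism of \cite[Section 6]{GWZ20m} is what guarantees that the coefficient systems are of geometric origin and that enough roots of unity have been adjoined for the Fermionic twists to be defined.
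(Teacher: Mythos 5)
The paper does not prove this theorem; it is quoted from \cite{GWZ20m} with only the one-line indication ``via $p$-adic Hodge theory and Chebotarev density,'' and your argument is a faithful expansion of exactly that strategy (Chebotarev plus Brauer--Nesbitt to identify the virtual Galois representations, then the Hodge--Tate/weight refinement \`a la Ito to recover the $(s,t)$-bigrading). Your proposal matches the approach of the cited proof, including correctly isolating the genuine technical point, namely the de Rham/weight compatibility for the twisted compactly supported cohomology of the open quotient stacks.
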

	Hence the equality of twisted stringy Hodge numbers is translated into an equality of stringy point counting which will be calculated by p-adic integration.
	\subsection{{$p$}-adic Measures on Orbifolds over Local Fields}\label{subsec: p-adic int on orbifold}
	Let $F$ be a $p$-adic field and $\sO_{F}$ is the integral ring with $\kappa_{F}\cong\BF_{q}$ the residue field. We now assume $V$ is smooth over $\Spec\sO_F$. And $\Gamma$ is a finite abelian group, $\#\Gamma$ is invertible over $\sO_F$, acting generically  fixed-point freely on $Y$. We put $\sM:=[V/\Gamma]$ and $M=V/\!\!/\Gamma$ is the geometric quotient. We denote $\pi:V\rightarrow M$ the quotient map. Let $\Delta$ be the locus where $\Gamma$ does not act freely, and $U=V\backslash\Delta$. We shall construct a measure on 
	\[
	M(\sO_{F})^{\#}=M(\sO_{F})\cap \pi(U)(F).
	\] 
	Notice that $\pi(U)$ is smooth over $S$, thus $\pi(U)(F)$ is an $F$-analytic manifold. Since $M(\sO_{F})^{\#}$ is open in $\pi(U)(F)$, $M(\sO_{F})^{\#}$ is an compact open $F$-analytic manifold.
	
	Since $M$ is $\BQ$-Gorenstein, thus its canonical sheaf $K_M$ over $S$ is $\BQ$-Cartier. Moreover, by \cite[Lemma 7.2]{Y17}, there is a unique $\BQ$-Cartier divisor $D$ on $M$ such that $\pi^{*}(K_M+D)=K_X$. In fact, 
	\[
	D=\sum \frac{r_i-1}{r_i}\Delta_i,
	\]
	where $\Delta_i$ is a component of push forward of the ramification divisor $\Delta$. We let $\mu_{\text{orb}}$ be the measure on $M(\sO_{F})^{\#}$ associated to the pair $(M,D)$. To be more precise, let $r$ be a positive integer, such that $r(K_M+D)$ give rise to a line bundle $I$ on $M$. Then locally, we can integrate a global section of $I$  to obtain a well-defined measure on $M(\sO_{F})^{\#}$. We may extend by zero to $M(\sO_{F})$. The divisor $D$ and the measure $\mu_{\text{orb}}$ are independent of the choice of the representation of $M$.
	\begin{remark}\label{rem:integration of top form}
		
		In particular, if $\codim \Delta\geq 2$, and there is a nowhere vanishing global section $\omega$ of  $(\Omega^{\text{top}}_{\pi(U)})^{\otimes \ell}$ for some $\ell>0$, then the $\mu_{\text{orb}}$ is the integration of $|\omega|^{1/\ell}$ on $M(\sO_{F})^{\#}\subset \pi(U)(F)$. This particular kind of $\mu_{\text{orb}}$ is what we will use in our topological mirror symmetry of parabolic Hitchin systems.
	\end{remark}
	Now given a $\mu_r$ gerbe $\alpha$ on $[V/\Gamma]$, we obtain a function (via the Hasse invariant):
	\[
	\inv_{\alpha}: M(\sO_{F})^{\#}\rightarrow\Br(F)[r]\cong\frac{\BQ}{\BZ}[r]
	\]
	and we define:
	\[
	f_{\alpha}=\exp(2\pi i\inv_{\alpha})
	\]
	we can check that $f_{\alpha}$ is locally constant and hence integrable on $M(\sO_{F})^{\#}$.
	\begin{theorem}\label{thm:twisted counting equals integration}\cite[Corollary 5.29]{GWZ20m}
		$\frac{\#_{\text{st}}^{\alpha}\sM(\kappa_F)}{q^{\dim\sM}}=\int_{M(\sO_{F})^{\#}}f_{\alpha}d\mu_{\text{orb}}$
	\end{theorem}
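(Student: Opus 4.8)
The plan is to prove this as the gerbe-twisted, orbifold refinement of Weil's classical identity $\#Y(\kappa_F)=q^{\dim Y}\int_{Y(\sO_F)}|\omega|$ for a smooth $\sO_F$-scheme $Y$ with a generating top form $\omega$; the entire content is to see how the twisted sectors and the fractional discrepancy divisor $D=\sum\frac{r_i-1}{r_i}\Delta_i$ convert the naive volume into the stringy count together with its fermionic shifts. Throughout one uses $\dim\sM=\dim M=\dim V$, since $\Gamma$ is finite, so that the normalizing power $q^{\dim\sM}$ is exactly the relative dimension of $M$ over $\sO_F$.

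First I would stratify the domain $M(\sO_F)^{\#}\subset\pi(U)(F)$ according to the conjugacy class $\gamma\in\Gamma$ of the local monodromy of the associated $\Gamma$-cover at the closed point of $\Spec\sO_F$. Because $\pi\colon U\to\pi(U)$ is an \'etale $\Gamma$-torsor, every $F$-point determines a class in $H^1(F,\Gamma)$; points of good reduction (whose special fibre stays in $\pi(U)$) carry the unramified class recorded by a Frobenius element, while points whose reduction meets the branch locus carry tame inertia, and in both cases the relevant datum is an element $\gamma$ well defined up to conjugacy (an honest element when $\Gamma$ is abelian). This produces a partition $M(\sO_F)^{\#}=\bigsqcup_{\gamma}M(\sO_F)^{\#}_{\gamma}$ into measurable pieces whose points reduce onto $\kappa_F$-points of the twisted sector $[V^{\gamma}/C(\gamma)]$, so that $\int_{M(\sO_F)^{\#}}f_{\alpha}\,d\mu_{\text{orb}}=\sum_{\gamma}\sum_{Z}\int_{(M(\sO_F)^{\#}_{\gamma})_Z}f_{\alpha}\,d\mu_{\text{orb}}$, the inner sum running over $Z\in\pi_0([V^{\gamma}/C(\gamma)])$.

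The heart of the argument is the per-sector volume computation. Fixing $(\gamma,Z)$, I would pass to the local-analytic model of $\pi$ transverse to the ramification: \'etale-locally $\pi$ is a product of maps $z\mapsto z^{r_i}$, and the relation $\pi^{*}(K_M+D)=K_V$ forces $\mu_{\text{orb}}$ to take the shape $\prod_i|w_i|^{-(r_i-1)/r_i}\,|dw|$ in these coordinates. Integrating this density over the residue disks reducing into $Z$ produces, after summing the resulting geometric series in powers of $q^{-1/\#\langle\gamma\rangle}$, the single factor $q^{F(\gamma,Z)}$, whose exponent is the age $\sum_i c_i/\#\langle\gamma\rangle$ read off from the valuations of the normal coordinates (these record the tame monodromy $\gamma$) weighted by the fractional multiplicities of $D$. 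Simultaneously, Weil's formula applied in the tangential directions, with the automorphism weights supplied by counting unramified $C(\gamma)$-torsors on the stack $Z=[W/C(\gamma)]$, returns $\#Z(\kappa_F)/q^{\dim Z}$. Multiplying the normal and tangential contributions, and using $\dim\sM-\dim Z=\codim V^{\gamma}$, yields the untwisted per-sector identity $\int_{(M(\sO_F)^{\#}_{\gamma})_Z}d\mu_{\text{orb}}=q^{F(\gamma,Z)-\dim\sM}\,\#Z(\kappa_F)$.

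Finally I would insert the gerbe. Since $f_{\alpha}=\exp(2\pi i\,\inv_{\alpha})$ is locally constant on $\pi(U)(F)$, it is constant along each residue disk, and under the transgression description of $\alpha$ its value on the disk over $z\in Z(\kappa_F)$ equals the normalized Frobenius eigenvalue $\Tr(\Fr_z,\sL^{\alpha}_{\gamma,z})$ on the gerbe line bundle $\sL^{\alpha}_{\gamma}$. Replacing $\#Z(\kappa_F)$ by the weighted count $\#^{\sL^{\alpha}_{\gamma}}Z(\kappa_F)$ in the previous step and summing over all $(\gamma,Z)$ reproduces, after multiplication by $q^{\dim\sM}$, the definition of $\#^{\alpha}_{\text{st}}\sM(\kappa_F)$. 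The main obstacle is exactly the per-sector local computation of the third paragraph: one must fix a genuinely normal-crossings transverse model of the ramification so that the fractional-exponent integral is clean, and one must reconcile the $p$-adic volume of the stacky base $[W/C(\gamma)]$ with its groupoid point count, i.e. account correctly for the $1/\#\Aut$ factors and for the contributions of the nontrivial unramified torsors, so that Weil's formula applies verbatim on each twisted sector.
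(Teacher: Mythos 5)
This theorem is imported verbatim from \cite[Corollary 5.29]{GWZ20m}; the paper gives no proof of its own, so the only meaningful comparison is with the cited source, whose argument your sketch reproduces faithfully (specialization of $M(\sO_{F})^{\#}$ to the twisted sectors $[V^{\gamma}/C(\gamma)](\kappa_F)$, the local computation in the transverse model $z\mapsto z^{r_i}$ whose geometric series over valuation classes collapses to the single power $q^{F(\gamma,Z)}$, and the identification of the locally constant function $f_{\alpha}$ with the Frobenius trace on the transgressed line bundle). The one phrase to tighten is your attribution of the sector label to ``a Frobenius element'' for points of good reduction: the index $\gamma$ is the image of tame inertia, so all good-reduction points lie in the untwisted sector, while the Frobenius class only determines which $\kappa_F$-point of the groupoid $[V^{\gamma}/C(\gamma)](\kappa_F)$ the given point specializes to.
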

	As we can see from the Definition \ref{def:stringy counting}, the effect of a gerbe $\alpha$ on stringy point counting coming from the trace of local geometric Frobenius on a $\ell$-adic sheaf induced by the transgression of the gerbe $\alpha$. When translated to $p$-adic integration, the twist is given by the integrable function $f_{\alpha}$.
	\subsection{Main Theorem}
	As in Section 3, we put $\hat{\sM}=\sM_{\SL_r,P}^{\sL}, \check{\sM}=\sM_{\PGL_r,P}^{e}$,  and $\hat{M},\check{M}$ are corresponding coarse moduli space, $M(\sO_F)^{\#}$ are notations as in the Subsection \ref{subsec: p-adic int on orbifold}.
	
	We first prove the following lemma, which implies that our (orbifold) measures are naturally given by symplectic forms by Remark \ref{rem:integration of top form}.
	\begin{lemma}\label{lem:fixed point}
		Considering the natural action of $\Pic^0(X)[r]$ on $\hat{\sM}$ via tensor product,  without abuse of notation, we put $\Delta$ as the non-free subvariety of the action. Then
		\[
		\codim \Delta\ge 2
		\]
	\end{lemma}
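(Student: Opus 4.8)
The plan is to reduce the statement to a dimension estimate for the fixed loci of the individual nontrivial group elements, and then to compute these dimensions by recognizing $\tau$-fixed Higgs bundles as direct images from cyclic \'etale covers of $X$. Write $\Delta=\bigcup_{\tau}\hat{\sM}^{\tau}$, the union being over the nontrivial $\tau\in\Pic^0(X)[r]$, where $\hat{\sM}^{\tau}\subset\hat{\sM}$ denotes the locus of stable $(\sE,\theta)$ admitting an isomorphism $(\sE,\theta)\cong(\sE\otimes\tau,\theta)$. Since $\Pic^0(X)[r]$ is finite, it suffices to prove $\codim\hat{\sM}^{\tau}\ge 2$ for each fixed $\tau$ of exact order $d\mid r$ with $d>1$.

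Next I would describe the fixed points. Given a stable $(\sE,\theta)\in\hat{\sM}^{\tau}$ and an isomorphism $\psi\colon\sE\to\sE\otimes\tau$ intertwining $\theta$, stability forces $\Aut(\sE,\theta)$ to be central, so the $d$-fold composite $\sE\to\sE\otimes\tau^{\otimes d}=\sE$ is a scalar; after rescaling we may assume it is $\operatorname{id}$. This is precisely a module structure on $\sE$ over the sheaf of algebras $\bigoplus_{i=0}^{d-1}\tau^{\otimes(-i)}$, i.e. $\sE=\pi_{\tau*}F$ for the connected cyclic \'etale cover $\pi_\tau\colon X_\tau\to X$ of degree $d$ associated to $\tau$, with $F$ a vector bundle of rank $r/d$ on $X_\tau$. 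As $\pi_\tau$ is \'etale (in particular over $D$), one has $\omega_{X_\tau}(\pi_\tau^{-1}D)=\pi_\tau^{*}\omega_X(D)$, so $\theta$ descends to a parabolic Higgs field on $F$, and the $\tau$-invariant filtration at each $x\in D$ induces a parabolic structure on $F$ at the $d$ points of $\pi_\tau^{-1}(x)$. The assignment $F\mapsto\pi_{\tau*}F$ is finite onto its image, with fibres the orbits of $\Gal(X_\tau/X)=\BZ/d$, so $\dim\hat{\sM}^{\tau}$ is bounded by the dimension of the moduli space $\sM'$ of rank $r/d$ parabolic Higgs bundles on $X_\tau$ carrying the induced determinant and parabolic data.

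I would then run the dimension count. By Riemann--Hurwitz the genus of $X_\tau$ is $g_\tau=d(g-1)+1$, so the leading term of $\dim\sM'$ is $2(r/d)^2(g_\tau-1)=2(r^2/d)(g-1)$, to be compared with the leading term $2r^2(g-1)$ of $\dim\hat{\sM}=2\dim\sH_P^0$; the gap $2r^2(g-1)(1-1/d)$ already exceeds $2$ once $g\ge 2$. To obtain the estimate uniformly I would compute both $\dim\hat{\sM}=2\dim\sH_P^0$ and $\dim\sM'$ by Riemann--Roch, carefully tracking the parabolic flag contributions at $D$ and at $\pi_\tau^{-1}(D)$ and the correction terms coming from the fixed-determinant ($\SL_r$) condition on each side, and conclude $\codim\hat{\sM}^{\tau}\ge 2$.

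The hard part will be the low-genus cases, where the leading term degenerates and the parabolic data must carry the argument. For $g=0$ the claim is vacuous, since $\Pic^0(X)[r]=0$. For $g=1$ one has $g_\tau=1$, the leading terms cancel, and the codimension is controlled entirely by the flag contributions: I must verify that the total of the local flag dimensions at the $d$ preimages of each $x$ (in rank $r/d$) falls short of the flag dimension at $x$ (in rank $r$) by at least $2$ overall, using the hypothesis $2g-2+\deg D>0$ and the genericity of the weights $\alpha$. Getting this local bookkeeping right, together with checking that the descent to $X_\tau$ is compatible with stability and with the fixed-determinant constraint, is where the genuine care is needed.
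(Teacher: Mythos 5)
Your reduction of $\Delta$ to the fixed loci $\hat{\sM}^{\tau}$, and the identification of $\tau$-fixed Higgs bundles with pushforwards from the cyclic \'etale cover $X_\tau$, is a genuinely different route from the paper's. The paper does not decompose by group elements and does not descend $\theta$: it regards the isomorphism $\phi\colon E\to E\otimes\sW$ itself as a Higgs field twisted by the torsion line bundle $\sW$, observes that its spectral curve is \'etale over $X$, and deduces that the underlying bundle $E$ moves in a family of dimension at most $g(X)$ while $\theta$ is allowed to range over all of $H^0(X,\sS\sP ar\sE nd(\sE)\otimes\omega_X(D))$; the compatibility of $\theta$ with $\phi$ is never used. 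Your argument is sharper because it halves both the bundle direction and the Higgs direction (since $\theta$ also descends to $X_\tau$), and your point about the flags (a $\psi$-invariant subspace of $E|_x=\bigoplus_y F|_y$ splits into eigenspace components because $\psi|_x$ is semisimple when the characteristic is prime to $d$) is a genuine issue that the paper glosses over. What the paper's version buys is brevity; what yours buys is a bound by an actual moduli space of smaller dimension, i.e.\ the standard endoscopic-locus picture.

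The gap is that the decisive inequality is deferred, and in the genus-one case it is not mere bookkeeping: it can fail. Take $g=1$, $r=2$, $D=\{x\}$ with the full flag. Then $\sH_P^0=H^0(X,\omega_X^{\otimes 2}(x))$ is one-dimensional, so $\dim\hat{\sM}=2$. For nontrivial $\tau\in\Pic^0(X)[2]$ the fixed points are of the form $(\pi_{\tau*}L,F^1,\theta)$ with $F^1$ one of the two eigenlines $L|_{y_i}$; strong parabolicity forces the descended Higgs field into $H^0(X,\omega_X\otimes\tau)=0$, but $L$ still moves in a one-dimensional torsor under $\Prym(X_\tau/X)$, and the resulting parabolic bundles are stable for generic weights. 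So here $\codim\Delta=1$, and the shortfall you intend to verify (``by at least $2$'') is only $1$. Thus the genus-one cases need extra hypotheses or a separate treatment rather than a uniform Riemann--Roch count; as written, your plan does not close them (and, for what it is worth, neither does the paper's own estimate $\dim\Delta\le g+h^0(\sS\sP ar\sE nd(\sE)\otimes\omega_X(D))$, which is never compared against $\dim\hat{\sM}-2$). For $g\ge 2$ your approach does go through: the leading gap $2r^2(g-1)(1-1/d)$ together with the fact that every flag contribution is strictly smaller on the cover side yields codimension at least $2(g-1)\bigl(r^2(1-1/d)-1\bigr)\ge 2$.
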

	\begin{proof}
		Let $(E,\theta)$ be a closed point in $\Delta$. By definition, there is a torsion line bundle $\sW\in \Pic^0(X)[r]$ and an isomorphism of vector bundles:
		\[
		\phi:E\xrightarrow{\cong} E\otimes\sW
		\]
		We can treat $(E,\phi)$ as a Higgs bundle twisted by the torsion line bundle $\sW$. Hence all such $E$ are line bundles on a spectral curve living in the total space of $\sM$. Since $\sW$ is torsion, such a spectral curve is a disjoint union of $X$. Hence we know that 
		\[
		\dim\Delta\le g(X)+\dim H^0(X,\sS \sP ar\sE nd(\sE)\otimes_{\sO_X}\omega_X(D))
		\]
		In particular, 
		\[
		\codim \Delta\ge 2.
		\]
	\end{proof}
	\begin{remark}\label{rem:translation-invariant measures}
		By Remark \ref{rem:integration of top form}, this codimension 2 Lemma, combined with top forms construced from the symplectic structure in Section \ref{sec 3}, implies that we have volume forms on $\sM_{\SL_{r},P}^{\sL},\sM_{\PGL_r,P}^{e}$ whose restriction to fibers over $\sH_{P}^{0\diamond}$ are cannonical translation-invariant volumes forms on those torsors.
	\end{remark}
	Even though, we do not have the codimension 2 condition in the definition of abstract dual Hitchin system \cite[Definition 6.9]{GWZ20m}. This is compensated by the existence of symplectic forms on $\hat{\sM}=\mathcal{M}_{\SL_r,P}^{\sL}$.
	Now let us state our theorem:
	\begin{theorem}\label{main theorem}
		There is a topological mirror symmetry for parabolic Hitchin systems:
		\begin{equation}
			E_{\text{st}}^{e\hat{\alpha}}(\hat{\sM};u,v)=E_{\text{st}}^{d\check{\alpha}}(\check{\sM};u,v)
		\end{equation}
		where $d=\deg\sL$ and we require that there exists a nonzero number $\lambda$ such that $e\equiv \lambda d\ (\emph{mod}\  \Delta_P)$
	\end{theorem}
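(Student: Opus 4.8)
The plan is to reduce the statement to an equality of twisted stringy point counts over finite fields, and then to evaluate both sides by $p$-adic integration along the Hitchin fibrations. By the theorem of \cite{GWZ20m} recalled above, it suffices to show that for every ring homomorphism $R\to\BF_q$ one has
\[
\#^{e\hat\alpha}_{\text{st}}(\hat{\sM}\times_R\BF_q)(\BF_q)=\#^{d\check\alpha}_{\text{st}}(\check{\sM}\times_R\BF_q)(\BF_q).
\]
By Theorem \ref{thm:twisted counting equals integration}, each side (after dividing by the appropriate power of $q$) equals an integral $\int_{\hat M(\sO_F)^\#}f_{e\hat\alpha}\,d\mu_{\text{orb}}$, respectively $\int_{\check M(\sO_F)^\#}f_{d\check\alpha}\,d\mu_{\text{orb}}$, over a $p$-adic field $F$ with residue field $\BF_q$. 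By Remark \ref{rem:translation-invariant measures}, the codimension $2$ Lemma \ref{lem:fixed point} together with the symplectic form of Proposition \ref{symplecticSL} guarantees that $\mu_{\text{orb}}$ restricts on each Hitchin fiber to the canonical translation-invariant measure on the corresponding Prym torsor, so the integrals fiber over the common base $\sH_P^{0\diamond}(\sO_F)$.

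First I would fiber both integrals over the common Hitchin base $\sH_P^{0\diamond}$ and apply Fubini, reducing to a pointwise comparison: for each $a\in\sH_P^{0\diamond}(\sO_F)$ one must compare the fiber integral over the $\Prym(\tilde{\sX}_a/X)$-torsor $\hat h^{-1}(a)$ with that over the dual $\Prym^\vee(\tilde{\sX}_a/X)$-torsor $\check h^{-1}(a)$. By Theorem \ref{thm:syz mirror symmetry} these two families are related by relative splitting of the gerbes, i.e. they are Fourier-dual torsors over dual abelian varieties, with the gerbe twists $e\hat\alpha$ and $d\check\alpha$ encoding the dual characters. The key local input is the Fourier-type inversion of \cite[Section 7]{GWZ20m}: on an abelian variety and its dual, integration of the locally constant character $f_\alpha$ attached to an arithmetic gerbe against the translation-invariant measure transforms fiberwise volumes correctly, so that the twisted volume of $\hat h^{-1}(a)$ equals the twisted volume of $\check h^{-1}(a)$ precisely when the characters match up.

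The congruence hypothesis $e\equiv\lambda d\pmod{\Delta_P}$ enters exactly here. The duality of Theorem \ref{thm:syz mirror symmetry} identifies the splitting torsors only up to the obstruction measured by $\Delta_P$, which by Corollary \ref{cor:k-rational point} controls the degrees of line bundles on $\tilde{X}_a$ that are defined over the base field; concretely, the image of the degree/norm map on Pryms is constrained modulo $\Delta_P$, so the available gerbe twists and the torsor classes only match when the relevant degrees are congruent modulo $\Delta_P$. The nonzero integer $\lambda$ reflects the freedom to rescale by an automorphism of $\Pic^0(X)[r]$, i.e. to pass between the twist $e$ and a multiple of $d$. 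I would verify that under $e\equiv\lambda d\pmod{\Delta_P}$ the characters $f_{e\hat\alpha}$ and $f_{d\check\alpha}$ correspond under the Fourier duality, so the pointwise fiber integrals agree for almost all $a$, and hence—since the non-generic locus has measure zero by Lemma \ref{lem:fixed point}—the total integrals agree.

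The main obstacle will be running the $p$-adic integration in the present non-abstract-dual setting: because the complement of $\sH_P^{0\diamond}$ has codimension one rather than two, the volume form does not extend as a nowhere-vanishing top form over all of $\hat{\sM}$, and one cannot invoke \cite[Definition 6.9]{GWZ20m} directly. The resolution is precisely the combination flagged in the introduction—using the symplectic form of Proposition \ref{symplecticSL} to produce the requisite nowhere-vanishing volume form on the open locus, and then controlling the boundary contribution via the codimension $2$ estimate of Lemma \ref{lem:fixed point} for the fixed-point locus of the $\Pic^0(X)[r]$-action. I would therefore devote the most care to checking that the measure-theoretic contribution of the loci where the torsor structure degenerates is negligible, and that the fiberwise Fourier duality of \cite[Section 7]{GWZ20m} applies verbatim once restricted to $\sH_P^{0\diamond}$, with the arithmetic of $\Delta_P$ supplying exactly the congruence condition in the statement.
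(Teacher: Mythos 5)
Your proposal follows essentially the same route as the paper: reduce to twisted stringy point counts, convert to $p$-adic integrals, use the symplectic form of Proposition \ref{symplecticSL} together with the codimension-$2$ Lemma \ref{lem:fixed point} to realize the orbifold measure as integration of a nowhere-vanishing top form, apply Fubini over the Hitchin base, and invoke the fiberwise duality of \cite{GWZ20m} (the paper cites Theorem 6.17 there) together with the $\Delta_P$-congruence (which guarantees that $\hat h_P^{-1}(b)(F)$ and $\check h_P^{-1}(b)(F)$ are simultaneously non-empty exactly when both gerbes split) to match the fiber integrals. The only small slip is attributing the negligibility of the bad locus over the base to Lemma \ref{lem:fixed point}: that lemma concerns the fixed-point locus of the $\Pic^0(X)[r]$-action, whereas the locus lying over $\sH_P^0\setminus\sH_P^{0\diamond}$ is measure zero simply because it sits over a proper closed subvariety of the base.
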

	Before we give the proof, let us first collect all the properties we have:
	\begin{enumerate}
		\item A pair of Hitchin systems:
		\[
		\begin{tikzcd}
			\hat{\sM}\supset\hat{\sM}^{\diamond}\ar[rd,"\hat{h}_P"]&&\check{\sM}\supset\check{\sM}^{\diamond}\ar[ld,"\check{h}_P"']\\
			&\sH^0_P\supset\sH^{0\diamond}_{P}
		\end{tikzcd}.
		\]
		\item Arithmetic Duality, see Theorem \ref{thm:syz mirror symmetry}.
		\item The condition:
		\begin{center}
			There exists a nonzero integer $\lambda$ such that $e\equiv \lambda d\mod \Delta_P$.
		\end{center}
		along with Corollary \ref{cor:k-rational point}, implies that we have the following:
		\begin{center}
			$\hat{h}_P^{-1}(b)(F)$ and $\check{h}_P^{-1}(b)(F)$ are both non-empty if and only if both $e\hat{\alpha}, d\check{\alpha}$ splits. 
		\end{center}
		\item There is global nowhere vanishing top forms on $\sM_{\SL_r,P}^{\sW}$ for all line bundle $\sW$ on $X$. In particular, by ``the codimension 2" Lemma \ref{lem:fixed point}, such kind of volume forms define volume forms on $\hat{\sM},\check{\sM}$. And when restricted to fibers over $\sH^{0\diamond}_{P}$, they are translation-invariant volume form on those torsors. See also Remark \ref{rem:translation-invariant measures}.
	\end{enumerate}
	
	Now we are ready to apply the p-adic formalism in \cite{GWZ20m}. For fiberwise integration, readers may refer to the calculation in \cite[Theorem 6.17]{GWZ20m}
	\begin{proof}
		
		By Theorem \ref{thm:twisted counting equals integration}, we only need to prove that:
		\begin{equation}\label{eq: integration equality}
			\int_{\hat{M}(\sO_{F})^{\#}}f_{\hat{\alpha}}d\hat{\mu}_{\text{orb}}=\int_{\check{M}(\sO_{F})^{\#}}f_{\check{\alpha}}d\check{\mu}_{\text{orb}}
		\end{equation}
		We first define:
		\[
		\sH_{P}^{0}(\sO_F)^{\flat}:=\{b\in \sH_{P}(\sO_{F})|b\in \sH_{P}^{\diamond}(F)\}.
		\]
		Roughly speaking, we want to consider those $\sO_{F}$ points of the Hitchin base $\sH_{P}^{0}$ which generic part lies in $\sH_{P}^{\diamond}$, over which the Hitchin fibers are torsors over dual Prym varieties.
		
		The next step is to calculate the integration over the whose space $\hat{\sM}$, $\check{\sM}$ via Fubini type theorem by first calculating integration over fibers. To do it,
		we define:
		\[
		\hat{M}(\sO_{F})^{\#}_a:=\{x\in \hat{M}(\sO_{F})^{\#}|\pi(x)\equiv a\ (\text{mod}\  \mathfrak{m}_F)\}
		\]
		and 
		\[
		\sH_{P}(\sO_F)^{\flat}_a:=\{b\in\sH^0_P(\sO_{F})^{\flat}|b\equiv a\ (\text{mod}\  \mathfrak{m}_F)\}
		\]
		Notice that we have:
		\[
		\hat{M}(\sO_{F})^{\#}_a\backslash \hat{h}_{P}^{-1}(\sH_P(\sO_F)^{\flat}_a)=\{x\in \hat{M}(\sO_{F})^{\#}_a|x\in \hat{h}_P^{-1}(\sH^0_P\backslash\sH_P^{\diamond})(\sO_{F})\}
		\]
		which is a closed subset in $\hat{M}(\sO_{F})^{\#}_a$ for any $a\in\sH^0_P(\kappa_F)$. And same definitions and properties for $\check{\sM}(\sO_{F})_{a}^{\#}$.
		
		Notice that, $\hat{\sM}$ is symplectic and hence admits a nowhere vanishing global top form $\hat{\omega}$. Since the parabolic Hitchin map is a local complete intersection, then the relative dualizing sheaf $\omega_{\hat{h}_{P}}$ is also a line bundle and we have:
		\begin{equation}\label{eq:composed top form}
			\hat{\omega}=\hat{h}_{P}^*\omega_{\sH_{P}}\wedge\omega_{\hat{h}_{P}}
		\end{equation}
		where $\omega_{\sH_P}$ is a nonwhere vanishing top form on the affine space $\sH_P$. Notice that, the restriction of $\omega_{\hat{h}_{P}}$ to fibers over $\sH_{P}^{\diamond}$ is the translation invariant volume forms on these torsors. 
		
		By Lemma \ref{lem:fixed point}, the subvariety of $\hat{\sM}$ where the action of $\Pic^{0}(X)[r]$ is not free is of codimension $\ge 2$. Hence by Remark \ref{rem:integration of top form}, we know that the volume form on $\hat{M}(\sO_{F})^{\#}$ resp. $\check{\sM}(\sO_F)^{\#}$ is the integration of $\hat{\omega}$ resp. $\check{\omega}$.
		
		Recall that we have:
		\[
		\hat{M}(\sO_{F})^{\#}_a\backslash \hat{h}_{P}^{-1}(\sH_P(\sO_F)^{\flat}_a)=\{x\in \hat{M}(\sO_{F})^{\#}_a|x\in \hat{h}_{P}^{-1}(\sH_P\backslash\sH_P^{\diamond})(\sO_{F})\}
		\]
		which is a closed subset in $\hat{M}(\sO_{F})^{\#}_a$ for any $a\in\sH^{0}_P(\kappa_F)$. Hence we only need to integrate over $\pi^{-1}(\sH_P(\sO_F)^{\flat}_a)$.  And this is similar for $\check{\sM}$ side.
		
		Now we may rewrite \eqref{eq: integration equality} as follows:
		
		\begin{align}\label{eq:fubini}
			&\int_{\hat{M}(\sO_{F})^{\#}}f_{\hat{\alpha}}d\hat{\mu}_{\text{orb}}\nonumber\\
			=&\sum_{a\in\sH_P(\kappa_F)}\int_{\hat{M}(\sO_{F})_a^{\#}}f_{\hat{\alpha}}d\hat{\mu}_{\text{orb}}\\
			=&\sum_{a\in\sH_P(\kappa_F)}\int_{\hat{h}_{P}^{-1}(\sH_{P}(\sO_{F})^{\flat}_a})f_{\hat{\alpha}}d\hat{\mu}_{\text{orb}}\nonumber\\
			=&\sum_{a\in\sH_P(\kappa_F)}\int_{\sH_{P}(\sO_{F})^{\flat}_a}d\mu_{\sH_{P}}\int_{\hat{h}_{P}^{-1}(b)(F)}f_{\hat{\alpha}}\abs{\omega_{\hat{h}_{P}}}\nonumber
		\end{align}

		Similarly, we have:
		\begin{align*}
			\int_{\check{M}(\sO_{F})^{\#}}f_{\check{\alpha}}d\check{\mu}_{\text{orb}}
			=\sum_{a\in\sH_P(\kappa_F)}\int_{\sH_{P}(\sO_{F})^{\flat}_a}d\mu_{\sH_{P}}\int_{\check{h}_{P}^{-1}(b)(F)}f_{\check{\alpha}}\abs{\omega_{\check{h}_{P}}}.
		\end{align*}

		Then by \cite[Theorem 6.17]{GWZ20m}, for all $a\in\sH_P^0(\kappa_F)$ and each $b\in \sH_{P}(\sO_F)_{a}$we have the fiberwise equality:
		\begin{align*}
			\int_{\hat{h}_{P}^{-1}(b)(F)}f_{\hat{\alpha}}\abs{\omega_{\hat{h}_{P}}}
			=\int_{\check{h}_{P}^{-1}(b)(F)}f_{\check{\alpha}}\abs{\omega_{\check{h}_{P}}}.
		\end{align*}
		Hence:
		\[
		\int_{\hat{M}(\sO_{F})^{\#}}f_{\hat{\alpha}}d\hat{\mu}_{\text{orb}}=\int_{\check{M}(\sO_{F})^{\#}}f_{\check{\alpha}}d\check{\mu}_{\text{orb}}
		\]
		And their twisted stringy Hodge polynomials are equal.
	\end{proof}
	We can give a generalization of \cite[Theorem 3.13]{Gothen19} which prove the topological mirror symmetry without the gerbe-twist in rank 2,3 cases. 
	\begin{theorem}\label{thm:delta_P is one}
		If $\Delta_P=1$, then for all integer $d,e$, we have:
		\[
		E_{\text{st}}^{e\hat{\alpha}}(\hat{\sM};u,v)=E_{\text{st}}^{d\check{\alpha}}(\check{\sM};u,v)=E_{\text{st}}(\hat{\sM};u,v)=E_{\text{st}}(\check{\sM};u,v)
		\]
		In particular, $\Delta_P=1$ holds if there exists a marked $x$ such that $P_x$ is Borel, i.e., a full flag filtration at $x$.
	\end{theorem}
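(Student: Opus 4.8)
The plan is to derive the four-term chain of equalities from Theorem \ref{main theorem} together with the rational-point results of Section~2, and then to settle the numerical claim about the Borel case by a direct computation of the conjugate partition. First I would observe that when $\Delta_P=1$ the congruence $e\equiv\lambda d \pmod{\Delta_P}$ holds for every pair $(d,e)$ and, say, $\lambda=1$. Hence Theorem \ref{main theorem} applies to all $d,e$ and supplies the middle equality $E_{\text{st}}^{e\hat{\alpha}}(\hat{\sM};u,v)=E_{\text{st}}^{d\check{\alpha}}(\check{\sM};u,v)$ with no further hypotheses.

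The remaining task is to remove the two twists, i.e.\ to identify each twisted stringy $E$-polynomial with its untwisted counterpart; combined with the middle equality this collapses all four quantities. On the $\SL_r$ side this is essentially immediate: by the Remark following Theorem \ref{thm:syz mirror symmetry} the gerbe $\hat{\alpha}$ is trivial, so $e\hat{\alpha}$ is trivial and the integrable function $f_{\hat{\alpha}}$ is identically $1$; feeding this into Theorem \ref{thm:twisted counting equals integration} yields $E_{\text{st}}^{e\hat{\alpha}}(\hat{\sM})=E_{\text{st}}(\hat{\sM})$. On the $\PGL_r$ side I would argue that $\Delta_P=1$ forces $d\check{\alpha}$ to split on every fiber contributing to the $p$-adic integral. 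Indeed, by Corollary \ref{cor:deg 1 div} the hypothesis $\Delta_P=1$ makes the degree map $\Pic(\tilde{X}_a)\to\BZ$ surjective on $k$-points, so each $\Prym$-torsor appearing in Proposition \ref{prop:fiber of sl and pgl} acquires a rational point and becomes trivial; by the splitting criterion recorded in item~(3) preceding the proof of Theorem \ref{main theorem}, nonemptiness of $\check{h}_P^{-1}(b)(F)$ is equivalent to the splitting of $d\check{\alpha}$ there, so $f_{\check{\alpha}}\equiv 1$ on the relevant integration locus. Substituting this into the Fubini computation \eqref{eq:fubini} of the main theorem's proof gives $E_{\text{st}}^{d\check{\alpha}}(\check{\sM})=E_{\text{st}}(\check{\sM})$, completing the chain.

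For the ``in particular'' statement I would compute $\Delta_P$ directly when some $P_x$ is a full flag. There the quasi-parabolic data is $m^\pt(x)=(1,\dots,1)$ with $\sigma_x=r$, so the partition is $n_i(x)=1$ for $1\le i\le r$ and its conjugate is $\mu_1(x)=r$ with $\mu_j(x)=0$ for $j\ge 2$. Hence $\#\{\ell\mid\mu_\ell(x)=r\}=1$, and since this integer enters the greatest common divisor defining $\Delta_P$, we get $\Delta_P=1$. Geometrically this records that the spectral curve is totally ramified over the $k$-point $x$ with a single preimage, which is automatically $k$-rational.

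The main obstacle I anticipate is the $\PGL_r$ untwisting: one must verify carefully that a degree-one rational divisor on $\tilde{X}_a$ really trivializes the relevant $\Prym^{\vee}$-torsor after passing through the norm map and the quotient by $\Pic^0(X)[r]$, and that the resulting fiberwise splitting of $d\check{\alpha}$ propagates to the statement $f_{\check{\alpha}}=1$ uniformly over the locus $\sH_P^{0\diamond}$ that carries the orbifold measure, so that the reduction to the untwisted integral is genuine and not merely pointwise on individual fibers.
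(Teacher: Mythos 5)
Your overall strategy coincides with the paper's: the middle equality comes from Theorem \ref{main theorem} because every congruence holds modulo $\Delta_P=1$; the untwisting is reduced to showing $f_{\hat{\alpha}}=f_{\check{\alpha}}=1$ on the integration locus; and your computation of the conjugate partition in the Borel case ($\mu_1(x)=r$, so $\#\{\ell\mid\mu_\ell(x)=r\}=1$ enters the gcd) is exactly right. The $\SL_r$ untwisting via triviality of $\alpha_{\SL_r,P}^{\sL}$ is also consistent with the paper's own remark after Theorem \ref{thm:syz mirror symmetry}.

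The one step that does not follow as you state it --- and which you yourself flag as the anticipated obstacle --- is the assertion that surjectivity of $\deg\colon\Pic(\tilde X_a)(k)\to\BZ$ forces \emph{every} Prym torsor $\Nm^{-1}(\sW)$ of Proposition \ref{prop:fiber of sl and pgl} to acquire a rational point. A rational line bundle $L_0$ of the right degree only trivializes $\Nm^{-1}(\Nm(L_0))$; for an arbitrary $\sW$ of the same degree, $\Nm^{-1}(\sW)(k)\neq\emptyset$ requires $\sW\otimes\Nm(L_0)^{-1}$ to lie in the image of $\Nm$ on $k$-points, and the norm map need not be surjective on rational points. The paper closes this gap with a reduction you omit: it first observes that all the (twisted) stringy $E$-polynomials depend only on $\deg\sL$ and $e$, and is therefore free to \emph{replace} $\sL$ by a line bundle $\sW\in\Pic^{d}(X)(k)$ chosen so that $\Nm^{-1}(\sW)(k)\neq\emptyset$ (namely via the rational point supplied by Corollary \ref{cor:deg 1 div}, applied over the function field of $\sH_P$). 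This yields sections of both $\hat h_P$ and $\check h_P$ over $\sH_{P}^{0\diamond}$, hence nonempty fibers $\hat h_P^{-1}(b)(F)$ and $\check h_P^{-1}(b)(F)$ for all relevant $b$, and then $f_{\hat{\alpha}}=f_{\check{\alpha}}=1$ uniformly by the splitting criterion and Equation \eqref{eq:fubini}, exactly as you intend. With that degree-only-dependence reduction inserted, your argument goes through and is essentially the paper's.
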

	
	\begin{proof}
		For any $d,e$, the equality:
		\[
		E_{\text{st}}^{e\hat{\alpha}}(\hat{\sM};u,v)=E_{\text{st}}^{d\check{\alpha}}(\check{\sM};u,v)
		\]
		follows directly from Theorem \ref{main theorem}.
		
		To prove the remaining equalities, we first point out the all these (twisted) stringy $E$-polynomials only depends on $\deg\sL,e$.
		
		By Corollary \ref{cor:deg 1 div}, we have a $k$ rational point in $\Pic^{d}(\tilde{X}_{a})$. Hence we can choose $\sW\in \Pic^{d}(X)(k),\sW'\in\Pic^{e}(X)(k)$ such that $\Nm^{-1}(\sW)(k)$ is non-empty. We apply this to the function field of $\sH_{P}$.  Hence then there are sections\footnote{Notice that in the following expressions, there is an abuse notations. Here $\hat{\sM}=\sM_{\SL_r,P}^{\sW},\check{\sM}=\sM_{\PGL_r,P}^{\deg\sW}$}:
		\[
		\sH_{P}^{0\diamond}\rightarrow \hat{\sM}^{\diamond}, \sH_{P}^{0\diamond}\rightarrow \check{\sM}^{\diamond}
		\]
		which means that for all $a\in\sH^{0}_{P}(\kappa_F)$ and all $b\in \sH_{P}(\sO_F)^{\flat}_a$, $\hat{h}_P^{-1}(b)(F)$ and $\check{h}_{P}^{-1}(b)(F)$ are non-empty. In particular, this means that $f_{\hat{\alpha}}=1$ over all $\hat{h}_P^{-1}(b)(F)$. Hence from Equation \eqref{eq:fubini}:
		\[
		\int_{\hat{M}(\sO_{F})^{\#}}f_{\hat{\alpha}}d\hat{\mu}_{\text{orb}}=\int_{\hat{M}(\sO_{F})^{\#}}d\hat{\mu}_{\text{orb}}
		\]
		which means that $E_{\text{st}}^{e\hat{\alpha}}(\hat{\sM};u,v)=E_{\text{st}}(\hat{\sM};u,v)$. And similarly for $\PGL_r$ side.
	\end{proof}

\end{document}